\newcommand{\id}{\mathrm{id}}
\newcommand{\R}{\mathbb{R}}
\newcommand{\N}{\mathbb{N}}
\newcommand{\Ge}{\mathcal{G}_{\infty}}
\newcommand{\Ho}{\mathcal{H}_0}
\newcommand{\LO}{\mathcal{LO}}
\theoremstyle{definition}
\newtheorem{question}{Question}
\newcommand{\hr}{\mathrm{Homeo}_{+}{(\R)}}
\theoremstyle{definition}
\newtheorem{theorem}{Theorem}
\newtheorem{lemma}[theorem]{Lemma}
\newtheorem{proposition}{Proposition}
\newtheorem{corollary}[theorem]{Corollary}
\theoremstyle{remark}
\theoremstyle{plain}
\theoremstyle{definition}
\theoremstyle{remark}
\begin{document}

	\title{Classifying the closure of standard orderings on $\mathrm{Homeo}_{+}{(\mathbb{R})}$}
	
	\author{Kyrylo Muliarchyk}
	
	\date{}
	
	\maketitle
	
	\begin{abstract}
		We characterize a closure of the set of dynamical-lexicographic orderings on $\hr$ and prove the existence of orderings outside of it.
	\end{abstract}
	
	\section{Introduction}
	
	\label{sec:1}
	
	A group $G$ is said to be \emph{left-orderable} if it admits a total ordering relation $\preceq$ that is invariant under left multiplication, in addition to the usual order properties. In other words,
\[
f \prec g \;\; \implies \;\; hf \prec hg \quad \text{for all } f,g,h \in G.
\]

Following Sikora \cite{Sikora}, we equip the set $\LO(G)$ of all left-orderings on $G$ with a topology as follows. For a subset $S \subset G$, let $U_S$ denote the set of all orderings in which each element of $S$ is positive (i.e., greater than the identity). The collection of all such sets $U_S$, with $S$ finite, forms a subbasis for a topology on $\LO(G)$.

An important example of a left-orderable group is $\hr$, the group of all orientation-preserving homeomorphisms of the real line. It is well known (see, e.g., \cite{DNR}*{Proposition 1.1.8}) that every countable left-orderable group acts faithfully on the real line by orientation-preserving homeomorphisms. Equivalently, a countable left-orderable group $G$ admits a canonical embedding
\[
\rho: G \to \hr,
\]
called the \emph{dynamical realization} of $G$. This embedding is uniquely determined up to topological conjugacy.

Left-orderability of $\hr$ is usually established as follows (see, e.g., \cite{ClayRolfsen}{[Example 1.11]}). 
Let $\mathbf{x}=(x_n)_{n=1}^{\infty}$ be a sequence dense in $\R$, and let $\Omega:\N \to \{+,-\}$ be a sign function. 
The \emph{standard dynamical-lexicographic ordering}, or simply \emph{standard}, 
$\preceq = \preceq_{\mathbf{x},\Omega}$ on $\hr$ is defined by declaring $f \succ g$ whenever 
\[
f(x_n) > g(x_n) \text{ and } \Omega(n)=+ \quad \text{or} \quad f(x_n) < g(x_n) \text{ and } \Omega(n)=-,
\] 
where $n$ is the smallest index such that $f(x_n) \neq g(x_n)$.

More generally, given a well-ordering $\leq_w$ of $\R$ and a sign function $\Omega:\R \to \{+,-\}$, the \emph{general dynamical-lexicographic ordering} $\preceq_{\leq_w,\Omega}$ is defined by comparing $f,g \in \hr$ at the $\leq_w$-least point $x \in \R$ where $f(x)\neq g(x)$. Clearly, the standard orderings form a special case of general dynamical-lexicographic orderings—namely, those for which the initial segment of the associated well-ordering is a dense subset of $\R$.

We denote by $\mathcal{S}$ the set of all standard orderings.
It is natural to ask (see \cite{DNR}*{Example 2.2.2]}) whether there exist non-standard orderings on $\hr$. 
The answer is negative: any general dynamical-lexicographic ordering in which the elements indexed by the initial segment $\omega$ do not form a dense subset of $\R$ is necessarily non-standard. 
For example, one may take dense sequences $\mathbf{x}=(x_n)$ and $\mathbf{y}=(y_n)$ of positive and negative numbers, respectively, and let the well-ordering begin with $\mathbf{x}$ followed immediately by $\mathbf{y}$.

In this paper, we investigate the following question.  
\begin{question}\label{question}
Are the standard orderings dense in $\LO(\hr)$?   
\end{question}
Specifically, our goal is to characterize the sequential closure $\mathrm{sc}(\mathcal{S})$ of the set of all standard orderings, as well as its topological closure $\overline{\mathcal{S}}$.

We prove.
\begin{theorem}\label{seqcl}
    The sequential closure $\mathrm{sc}(\mathcal{S})$ of the set of all standard orderings is the set $\mathcal{G}$ of all general dynamical-lexicographic orderings.
\end{theorem}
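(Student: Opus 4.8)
The plan is to prove the two inclusions $\mathcal{G}\subseteq\mathrm{sc}(\mathcal{S})$ and $\mathrm{sc}(\mathcal{S})\subseteq\mathcal{G}$ separately. Throughout I use the reformulation of Sikora convergence in terms of positive cones: a sequence $(\preceq_k)$ converges to $\preceq$ exactly when, for every $g\in\hr$, the truth value of $g\succ_k\mathrm{id}$ is eventually constant and equal to that of $g\succ\mathrm{id}$. For a dynamical-lexicographic ordering the sign of $g$ is read off at a single real number, namely the $\leq_w$-least point of the open set $U_g=\{x:g(x)\neq x\}$; write $x^*(g)=\min_{\leq_w}U_g$ for this decision point. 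Recall also that, by the discussion preceding the theorem, the standard orderings are precisely the general dynamical-lexicographic orderings whose well-order begins with a dense $\omega$-sequence.

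For $\mathcal{G}\subseteq\mathrm{sc}(\mathcal{S})$ I would fix a general ordering $\preceq_{\leq_w,\Omega}$ and build an explicit approximating sequence of standard orderings. Fix a countable basis $(I_i)_{i\ge 1}$ of open intervals of $\R$ and set $p_i=\min_{\leq_w}I_i$. The heart of this direction is the identity, valid for every $g\neq\mathrm{id}$,
\[
x^*(g)=\min_{\leq_w}\{p_i : p_i\in U_g\}.
\]
Indeed, since $U_g$ is open, $x^*(g)$ lies in some basic interval $I_i\subseteq U_g$, and minimality forces $p_i=x^*(g)$, while any $p_i\in U_g$ satisfies $x^*(g)\leq_w p_i$. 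Now let $\preceq_k$ be the standard ordering whose dense sequence has as its first $k$ entries the points $p_1,\dots,p_k$ listed in $\leq_w$-increasing order, followed by a fixed dense enumeration of $\R$, and whose sign function agrees with $\Omega$ on those first $k$ entries (the signs on the tail being irrelevant). Scanning $\preceq_k$ by index then scans $p_1,\dots,p_k$ in $\leq_w$-order, so the first entry lying in $U_g$ is $\min_{\leq_w}\{p_i: i\le k,\ p_i\in U_g\}$. By the displayed identity this equals $x^*(g)$ as soon as $k$ exceeds the least index $i$ with $p_i=x^*(g)$, and there the prescribed sign is $\Omega(x^*(g))$. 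Hence for every $g$ the sign of $g$ in $\preceq_k$ is eventually the sign of $g$ in $\preceq_{\leq_w,\Omega}$, giving $\preceq_k\to\preceq_{\leq_w,\Omega}$.

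The reverse inclusion $\mathrm{sc}(\mathcal{S})\subseteq\mathcal{G}$ amounts to showing that $\mathcal{G}$ is sequentially closed: a limit $\preceq$ of standard orderings must again be decided, uniformly in $g$, at the least point of some well-order. My plan is to reconstruct the pair $(\leq_w,\Omega)$ directly from $\preceq$ using localized probes. For an interval $(a,b)$ let $f_{a,b}$ be a single positive bump supported on $(a,b)$; then $f_{a,b}\succ\mathrm{id}$ records the value of $\Omega$ at $\min_{\leq_w}(a,b)$, while a homeomorphism that is a positive bump on one interval and a negative bump on a disjoint interval records which of the two is $\leq_w$-earlier. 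These comparisons should assemble, after passing to a limit over shrinking neighborhoods to pin down individual points, into a relation on $\R$ together with a sign at each point, and the task is to verify that the resulting relation is a genuine well-order and that $\preceq=\preceq_{\leq_w,\Omega}$ for it.

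I expect the decisive difficulty to lie in this second inclusion, and specifically in establishing well-foundedness of the reconstructed relation. Each approximant decides $g$ at a single point, but a priori the limiting decision structure could harbor an infinite $\leq_w$-descending chain, in which case $\preceq$ would fail to be dynamical-lexicographic; ruling this out is presumably where the hypothesis that the approximants are genuine point-comparison orders, together with the precise meaning of sequential (as opposed to net) convergence, must be used in an essential way. A secondary technical nuisance is that decision points are minima of open sets and hence interior points, so the probes must be organized carefully to recover the sign $\Omega(x)$ and the relative $\leq_w$-position of a point $x$ itself rather than of the intervals surrounding it. Once well-foundedness and totality are secured, the verification $\preceq=\preceq_{\leq_w,\Omega}$ should follow by matching decision points exactly as in the first inclusion.
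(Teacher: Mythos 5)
Your first inclusion, $\mathcal{G}\subseteq\mathrm{sc}(\mathcal{S})$, is correct and is essentially the paper's argument: the points $p_i=\min_{\leq_w}I_i$ are precisely the relevant points of $\preceq_{\leq_w,\Omega}$ (Proposition~\ref{relevant} and Lemma~\ref{relcountable}), and placing longer and longer $\leq_w$-ordered initial chunks of this countable set at the front of a dense sequence is exactly the paper's construction; your identity $x^*(g)=\min_{\leq_w}\{p_i : p_i\in U_g\}$ is a clean way to package the verification.

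The second inclusion is a genuine gap, and you have flagged it yourself: you never establish well-foundedness of the reconstructed relation, and the strategy you propose cannot be repaired without changing its starting point. Any attempt to recover $(\leq_w,\Omega)$ from the limit ordering $\preceq$ alone by probing with bumps is doomed, because by Theorem~\ref{S=T} the topological closure of $\mathcal{S}$ is the set $\mathcal{T}$ of typical orderings, which strictly contains $\mathcal{G}$: there exist orderings whose response to every such probe is perfectly consistent (their signs depend only on $A_f$ and $B_f$) and which nevertheless are not dynamical-lexicographic. The data that distinguishes a sequential limit from a mere closure point lives in the approximating sequence, not in the limit. The paper exploits this through Lemma~\ref{stable}: for a convergent sequence $\preceq_{\mathbf{x}^{(n)},\Omega_n}$ of standard orderings, each coordinate $x^{(n)}_m$ and each sign $\Omega_n(m)$ is eventually constant in $n$, proved by exhibiting test functions that would otherwise change sign infinitely often (for instance, alternating positive and negative bumps along a strictly monotone subsequence of candidate points). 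The limit well-order is then assembled by transfinite recursion in Corollary~\ref{limgen} --- take the $\omega$ many stabilized points, delete their closure from every sequence, stabilize again, and iterate until the accumulated set is dense --- so well-foundedness comes for free from the recursion rather than having to be verified for an abstractly reconstructed relation. To complete your proof you would need to replace the probe-based reconstruction with a stabilization statement about the sequence itself.
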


We begin the characterization of the set $\overline{\mathcal{S}}$ by introducing the following sets.
For a function $f \in \hr$, define
\[
A_f \coloneq \{x \in \R : f(x) > x\}, \qquad 
B_f \coloneq \{x \in \R : f(x) < x\}.
\]

Geometrically, $A_f$ is the set of points where the graph of $f$ lies \textbf{above} the diagonal $y=x$, while $B_f$ consists of the points where the graph lies \textbf{below} the diagonal. 

It is straightforward to verify that for any $f \in \hr$, the sets $A_f$ and $B_f$ are open and disjoint subsets of $\R$. 

We call an ordering $\preceq \in \LO(\hr)$ \emph{typical} if the sign of each $f \in \hr$ is determined solely by the sets $A_f$ and $B_f$. In other words, in a typical ordering, two functions $f,g \in \hr$ have the same sign whenever $A_f = A_g$ and $B_f = B_g$. We denote the set of all typical orderings by $\mathcal{T}$.

In Section \ref{sec:proof1}, we establish the following result.

\begin{theorem}\label{S=T}
The closure $\overline{\mathcal{S}}$ of the set of standard orderings coincides with the set $\mathcal{T}$ of all typical orderings.
\end{theorem}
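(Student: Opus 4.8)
I would prove the two inclusions $\overline{\mathcal{S}}\subseteq\mathcal{T}$ and $\mathcal{T}\subseteq\overline{\mathcal{S}}$ separately; the first is soft, the second carries all the content. For $\overline{\mathcal{S}}\subseteq\mathcal{T}$ the plan is to show that $\mathcal{S}\subseteq\mathcal{T}$ and that $\mathcal{T}$ is closed. The first is immediate from the definition of a standard ordering: in $\preceq_{\mathbf{x},\Omega}$ the sign of $f$ is read off at the least $n$ with $x_n\in A_f\cup B_f$ as $\Omega(n)$ combined with whether $x_n\in A_f$ or $x_n\in B_f$, so it depends only on the pair $(A_f,B_f)$ and the fixed data $(\mathbf{x},\Omega)$. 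For closedness, note that for each fixed $g\neq\id$ the set $\{{\preceq}:g\succ\id\}$ is clopen, since its complement is $\{{\preceq}:g^{-1}\succ\id\}$, another subbasic open set. Hence for any pair $f,g$ the condition ``$f$ and $g$ have the same sign'' is clopen, and $\mathcal{T}$ is the intersection of these conditions over all pairs with $A_f=A_g$ and $B_f=B_g$; an intersection of closed sets is closed. Thus $\overline{\mathcal{S}}\subseteq\overline{\mathcal{T}}=\mathcal{T}$.

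For $\mathcal{T}\subseteq\overline{\mathcal{S}}$, fix a typical ordering $\preceq$ and a finite set $f_1,\dots,f_k$ with prescribed signs $\epsilon_i$; since basic neighborhoods in the Sikora topology are cut out by finitely many signs, it suffices to build a standard ordering matching $\epsilon_1,\dots,\epsilon_k$. Replacing $f_i$ by $f_i^{-1}$ when $\epsilon_i=-$ (which swaps $A_{f_i}$ and $B_{f_i}$), I reduce to the case where all targets are $+$; write $g_1,\dots,g_k$ for the resulting functions, all positive. I will choose finitely many ordered ``control points'' $y_1,\dots,y_r$ with signs $\omega_1,\dots,\omega_r$, then append an enumeration of a countable dense set to obtain a dense sequence; a given $g_i$ is decided by the \emph{first} $y_j$ with $y_j\in A_{g_i}\cup B_{g_i}$, and the appended points never interfere. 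The construction is greedy: given the set $S$ of still-undecided indices, I look for a point lying in $\bigcup_{i\in S}A_{g_i}$ but not in $\bigcup_{i\in S}B_{g_i}$ (take $\omega=+$), or vice versa; any such point hits all undecided functions it meets on a single side and so decides a nonempty batch correctly. Progress is possible at every step precisely when, for the current $S$, one has $\bigcup_{i\in S}A_{g_i}\neq\bigcup_{i\in S}B_{g_i}$, so the whole construction reduces to the following statement applied to each nonempty subcollection of the positive $g_i$.

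Key Lemma: if $g_1,\dots,g_m\succ\id$ in a typical ordering, then $\bigcup_i A_{g_i}\neq\bigcup_i B_{g_i}$. I would prove this by contradiction. Suppose both unions equal the open set $W$. The point is that typicality lets me replace each $g_i$ by any representative with the same pair $(A_{g_i},B_{g_i})$ without changing its sign, while the support sets of a \emph{product} depend on the chosen representatives. I would first establish a Covering Lemma: if $\bigcup_i B_{g_i}\subseteq\bigcup_i A_{g_i}=W$, then one can pick representatives $h_i\equiv g_i$ (each preserving every component of $W$) whose product $h_1\cdots h_m$ is purely expanding on $W$, i.e. $A_{h_1\cdots h_m}=W$ and $B_{h_1\cdots h_m}=\emptyset$. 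Granting this, under the assumption $\bigcup A_{g_i}=\bigcup B_{g_i}=W$ \emph{both} hypotheses hold, so choosing ``up-dominant'' representatives yields a positive product $P_+$ of class $(W,\emptyset)$, and by the symmetric ``down-dominant'' choice a positive product $P_-$ of class $(\emptyset,W)$. Since $(\emptyset,W)$ is the class of $P_+^{-1}$, typicality gives that the sign of $P_-$ equals $-(\text{sign of }P_+)=-$, contradicting $P_-\succ\id$; this is exactly the mechanism that rules out, e.g., a ``bump up on $(0,2)$, bump up on $(1,3)$, bump down on $(0,3)$'' pattern, where the down bump is $(A,B)$-equivalent to a product of the two up bumps.

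The main obstacle is the Covering Lemma, and specifically the control of representatives near the endpoints of the components of $W$. The idea is to take representatives whose up-parts map each up-component almost onto itself (pushing points close to the right endpoint) and whose down-parts move points by a negligible amount, so that after applying $h_1\cdots h_m$ every point of $W$ has received an uncancelled rightward push; since every $h_i$ fixes the endpoints of each component of $W$, the analysis proceeds component by component. The delicate case is near a right endpoint $b$ of a component, where the available rightward pushes become arbitrarily small and could in principle be overwhelmed by the (tiny) leftward moves of later factors. I expect the real work to be in choosing the representatives so that the downward displacements vanish sufficiently fast relative to position near each endpoint (and uniformly enough to handle accumulation of components), guaranteeing a strictly rightward net displacement at every interior point of $W$; the rest of the argument is bookkeeping.
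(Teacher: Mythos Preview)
Your proposal is correct and follows essentially the same route as the paper: the same two inclusions, the same greedy construction of a standard ordering from the Key Lemma (the paper's Lemma~\ref{AnB}), and the same contradiction strategy for that lemma via ``up\nobreakdash-dominant'' and ``down\nobreakdash-dominant'' representatives whose products have classes $(W,\emptyset)$ and $(\emptyset,W)$. For what you call the Covering Lemma, the paper keeps the up-parts $f_i^{+}$ fixed and damps only the down-parts to $g_i^{-}(x)=\max\{f_i^{-}(x),\gamma(x)\}$ for a single auxiliary $\gamma\in\hr$ with $B_\gamma=W$, constructed via the implicit parameter $t_x$ solving $\Theta(x,t_x)=x$ where $\Theta(x,t)=f_1^{+}(\cdots f_{n}^{+}(x-t)-t\cdots)$; this is precisely the ``downward displacements vanish sufficiently fast near each endpoint'' control you anticipated, made uniform across components by building $\gamma$ globally rather than componentwise.
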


As a consequence, in Section \ref{sec:proof2} we answer Question~\ref{question} negatively by proving the following hierarchy:

\begin{theorem}\label{hierarchy}
The following strict inclusions hold:
\[
\mathcal{S}\subsetneq \mathcal{G} \subsetneq \mathcal{T} \subsetneq \LO(\hr).
\]
\end{theorem}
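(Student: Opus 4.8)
The plan is to treat the three strict inclusions separately, obtaining the non-strict inclusions almost for free and concentrating the work on the three separating examples. The chain of non-strict inclusions is immediate: $\mathcal{S}\subseteq\mathcal{G}$ holds because a standard ordering is exactly a general one whose well-ordering has a dense initial $\omega$-segment; $\mathcal{G}\subseteq\mathcal{T}$ follows at once from Theorems \ref{seqcl} and \ref{S=T}, since $\mathcal{G}=\mathrm{sc}(\mathcal{S})\subseteq\overline{\mathcal{S}}=\mathcal{T}$ (the sequential closure is always contained in the topological closure); and $\mathcal{T}\subseteq\LO(\hr)$ is the definition of a typical ordering. Thus everything reduces to exhibiting, for each inclusion, an ordering on the larger side missing from the smaller side.

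For $\mathcal{S}\subsetneq\mathcal{G}$ I would use the ordering $\preceq_{0}$ sketched in the introduction: well-order $\R$ by listing a sequence dense in $(0,\infty)$, then a sequence dense in $(-\infty,0)$, then the remaining reals. By construction $\preceq_{0}\in\mathcal{G}$. To see it is not standard, suppose $\preceq_{0}=\preceq_{\mathbf{z},\Omega}$ for some sequence $\mathbf{z}=(z_n)$ dense in $\R$. Density forces $z_n<0$ for some $n$; let $m_0$ be least with $z_{m_0}<0$. I would then choose $f\in\hr$ fixing the finitely many points $z_1,\dots,z_{m_0-1}$, moving $z_{m_0}$, and moving a small positive interval (necessarily meeting only points of $\mathbf{z}$-index exceeding $m_0$). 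For such $f$ the standard rule decides the sign at the negative point $z_{m_0}$, whereas $\preceq_{0}$ decides it at a positive point; replacing $f$ by a homeomorphism below the diagonal on that positive interval flips the $\preceq_{0}$-sign but leaves the $\preceq_{\mathbf{z},\Omega}$-sign unchanged, a contradiction. Hence $\preceq_{0}\in\mathcal{G}\setminus\mathcal{S}$.

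For $\mathcal{G}\subsetneq\mathcal{T}$ the task is to build a typical ordering that is not point-decided, i.e.\ not of the form ``compare at the $\leq_w$-least moved point.'' In a general ordering the sign of $f$ depends only on the single $\leq_w$-least point of $A_f\cup B_f$ together with $\Omega$ there; I would instead construct a typical cone whose sign rule also reads off the local configuration of $A_f,B_f$ at that least point, for instance whether it is isolated in, or a one-sided accumulation point of, $A_f\cup B_f$. Such a rule still depends only on the pair $(A_f,B_f)$, so the ordering is typical. The real work is checking the positive-cone axioms (closure under composition and totality), which do not translate transparently through the map $f\mapsto(A_f,B_f)$. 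Non-generality then follows by exhibiting two homeomorphisms sharing the same $\leq_w$-least moved point, both lying above the diagonal there, to which the construction assigns opposite signs; no fixed pair $(\leq_w,\Omega)$ can do this, since it would assign both the common value $\Omega$ at that point.

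The deepest part is $\mathcal{T}\subsetneq\LO(\hr)$, the existence of a non-typical ordering, which is the paper's headline. Here I must produce a left-ordering together with homeomorphisms $h_1,h_2$ satisfying $A_{h_1}=A_{h_2}$ and $B_{h_1}=B_{h_2}$ yet $h_1\succ\id\succ h_2$. A first observation is that displacement magnitude must be used: two elements with identical $A,B$ differ only in how far they push points, and any single one-parameter subgroup (the translations, or the dilations fixing $0$) is archimedean, hence sign-coherent in every left-ordering, so the two witnesses cannot be powers or roots of a common element. I would therefore take $h_1,h_2$ above the diagonal on a common interval, generating a free (or otherwise well-understood left-orderable) subgroup $\Gamma\le\hr$, equip $\Gamma$ with a left-ordering making $h_1$ positive and $h_2$ negative, and extend to all of $\hr$. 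The main obstacle is precisely this extension: left-orderings do not extend from subgroups in general, so one must either verify the semigroup-extension criterion for the chosen $\Gamma$ or, more likely, build the cone on $\hr$ directly by a dynamical or transfinite construction that is magnitude-sensitive within a single interval while remaining a total, multiplicatively closed cone. Once such an ordering is in hand, non-typicality is immediate from $h_1,h_2$, and together with the previous two paragraphs this yields the full strict chain $\mathcal{S}\subsetneq\mathcal{G}\subsetneq\mathcal{T}\subsetneq\LO(\hr)$.
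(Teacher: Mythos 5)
Your skeleton --- reduce to three separating examples, with the non-strict inclusions coming for free (your derivation $\mathcal{G}=\mathrm{sc}(\mathcal{S})\subseteq\overline{\mathcal{S}}=\mathcal{T}$ from Theorems \ref{seqcl} and \ref{S=T} is a legitimate shortcut past Proposition \ref{gentyp}) --- matches the paper, and your first separation $\mathcal{S}\subsetneq\mathcal{G}$ is essentially the paper's own test-function argument, packaged there as Proposition \ref{relevant2}. The problem is that for the two remaining separations, which carry all of the theorem's content, you stop exactly where the work begins, so the proposal has genuine gaps. For $\mathcal{G}\subsetneq\mathcal{T}$ you propose a sign rule that reads off the local configuration of $A_f\cup B_f$ at the $\leq_w$-least moved point and then concede that ``the real work is checking the positive-cone axioms.'' That concession is the gap: composition can destroy isolation or one-sided accumulation at the deciding point, so it is not clear any such local rule yields a semigroup, and you exhibit no candidate that verifiably does. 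The paper avoids pointwise rules entirely: it orders the quotient $\Ge\simeq\faktor{\hr}{\Ho}$ of germs at $\infty$ via a non-principal ultrafilter evaluated along a well-ordered family of sequences tending to $+\infty$ (Proposition \ref{geinf}), and combines this lexicographically with a typical ordering on $\Ho$. That ordering is manifestly determined by $(A_f,B_f)$, hence typical; it is not general because for every $x$ there are two maps with the same nontrivial germ at $\infty$ lying on opposite sides of the diagonal at $x$.

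For $\mathcal{T}\subsetneq\LO(\hr)$ you correctly identify the obstacle --- extending an ordering from a subgroup $\Gamma=\langle h_1,h_2\rangle$ to all of $\hr$ --- but do not overcome it; ``build the cone directly by a dynamical or transfinite construction'' is a wish, not a step. The missing idea is again to replace the subgroup by the normal-subgroup/quotient pair $\Ho\trianglelefteq\hr$, $\faktor{\hr}{\Ho}\simeq\Ge$: a left-ordering of a quotient always combines lexicographically with a left-ordering of the kernel, so no extension problem arises. The task then becomes ordering $\Ge$ so that $f_\infty\succ\id\succ g_\infty$ for $f(x)=x+1$ and $g(x)=2x-1$ (both with $A=\R$, $B=\emptyset$), and this is where the paper does real work: a modified compactness argument (Theorem \ref{comp2}) reduces to finitely generated subgroups $H\leq\Ge$, where one uses that $g_\infty\succ_\infty f_\infty^n\succ_\infty\id$ for all $n$ in the ultrafilter order, so the dynamical realization of $H$ fixes $L=\lim_n\rho(f_\infty^n)(0)$ under $\rho(f_\infty)$ while $\rho(g_\infty)(L)>L$; a standard ordering on the target $\hr$ beginning at $x_1=L$ with sign $-$ and $x_2=0$ with sign $+$ then makes $g_\infty$ negative and $f_\infty$ positive. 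Your observation that the two witnesses cannot lie on a common one-parameter subgroup is correct and consistent with this, but without the quotient device and the compactness-plus-realization step the separation is not proved.
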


This paper is organized in the following way:

In Section \ref{sec:top}, we discuss the Sikora topology on $\LO(G)$, and convergence in it specifically for the case of $G=\hr$. We study the dynamical-lexicographic orderings. Specifically, we prove that for any such ordering, the data from only countably many points is relevant. We also prove Theorem \ref{seqcl} and the first inclusion of Theorem \ref{hierarchy}.

In Section \ref{sec:dynre}, we remind the construction of the dynamical realization and its connection to standard orderings.

In Section \ref{sec:proof1}, we prove Theorem \ref{S=T} and the second inclusion of Theorem \ref{hierarchy}.

In Section \ref{sec:proof2}, we finish the proof of Theorem \ref{S=T} by giving the examples that illustrate the strictness of the inclusions.

	\section{Topology on $\LO(\hr)$}

\label{sec:top}

The following criterion of left-orderability is well known (see, e.g. \cite{ClayRolfsen}*{Theorem 1.24}).
\begin{theorem}\label{positivecone}
    A group $G$ is left-orderable if and only if there exists a subset $P\subset G$ such that
    \begin{enumerate}
        \item $P\cdot P\subset P$;
        \item $P\cup P^{-1}=G\setminus\{1\}$.
    \end{enumerate}
\end{theorem}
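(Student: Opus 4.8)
The plan is to establish the standard correspondence between left-orderings and their sets of positive elements, proving the two implications separately. Throughout I write $1$ for the identity of $G$ and, given a left-ordering $\preceq$, I abbreviate its strict part by $\prec$.

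For the forward implication, I would assume $G$ carries a left-invariant total order $\preceq$ and take the \emph{positive cone}
\[
P \coloneq \{\, g \in G : 1 \prec g \,\}.
\]
Condition (1) then follows from left-invariance together with transitivity: if $g,h \in P$, so that $1 \prec g$ and $1 \prec h$, left-multiplying the second inequality by $g$ gives $g \prec gh$, and chaining with $1 \prec g$ yields $1 \prec gh$, i.e. $gh \in P$. For condition (2), totality applied to any $g \neq 1$ gives either $1 \prec g$, so $g \in P$, or $g \prec 1$; in the latter case left-multiplying by $g^{-1}$ produces $1 \prec g^{-1}$, whence $g^{-1} \in P$ and $g \in P^{-1}$. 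Thus $P \cup P^{-1} = G \setminus \{1\}$.

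For the converse, I would start from a set $P$ satisfying (1) and (2) and define a relation by declaring $f \prec g$ precisely when $f^{-1}g \in P$. Left-invariance is immediate and is the cleanest part of the argument, since $(hf)^{-1}(hg) = f^{-1}g$ for every $h$, so $f \prec g$ and $hf \prec hg$ are literally the same condition. Totality for $f \neq g$ follows because $f^{-1}g \neq 1$ lies in $P \cup P^{-1}$ by (2), the two cases giving $f \prec g$ and $g \prec f$ respectively. Transitivity is where condition (1) enters: from $f^{-1}g \in P$ and $g^{-1}h \in P$ one multiplies to get $f^{-1}h = (f^{-1}g)(g^{-1}h) \in P$, so $f \prec h$. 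Extending $\prec$ to $\preceq$ by adjoining equality then yields a left-ordering, completing the equivalence.

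The point requiring the most care — and the only place where both hypotheses are used jointly — is the asymmetry (equivalently irreflexivity) of the constructed order, which is needed to know $\prec$ is a genuine strict order rather than merely a total preorder. The key observation is that condition (2) forces $1 \notin P$, since $1 \notin P \cup P^{-1}$. Were both $f \prec g$ and $g \prec f$ to hold, then $f^{-1}g \in P$ and $g^{-1}f = (f^{-1}g)^{-1} \in P$, and condition (1) would give $1 = (f^{-1}g)(g^{-1}f) \in P$, contradicting $1 \notin P$; the same computation with $g = f$ rules out $f \prec f$. This interplay between the semigroup property (1) and the exhaustion property (2) is the crux of the argument, and I would present it explicitly rather than leaving it to the reader.
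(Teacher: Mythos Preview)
Your proof is correct and follows exactly the same approach as the paper: define $P$ as the set of positive elements for one direction, and set $f \prec g \iff f^{-1}g \in P$ for the other. The paper presents this as a two-line sketch omitting all verifications, so your version is strictly more detailed---in particular, your explicit treatment of asymmetry is a welcome addition the paper leaves implicit.
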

\begin{proof}[Sketch]
    Suppose $G$ is left-orderable, and let $\preceq$ be a left-ordering on $G$. Then the set 
    \[
    P=P_{\preceq}=\{g\in G\mid g\succ 1\}
    \]
    of all positive elements satisfies the conditions of the theorem.

    Conversely, given such a set $P$, the relation $g\prec h$ if $g^{-1}h \in P$ defines a left-ordering on $G$.  
\end{proof}

A set $P$ satisfying the conditions of Theorem \ref{positivecone} is called a \emph{positive cone}. Positive cones on a given group $G$ are in one-to-one correspondence with left-orderings on $G$ via the map $\preceq \mapsto P_{\preceq}$. This bijection identifies the set $\LO(G)$ of all left-orderings with the set $\mathcal{P}(G)$ of all positive cones in the power set $2^G$. The power set $2^G$ admits a canonical bijection $2^G\to \{0,1\}^G$, where we equip $\{0,1\}^G$ with the product topology. The Sikora topology on $\LO(G)$ coincides with the topology induced from this product topology, since the sets $U_S$ correspond to the cylindrical sets forming the standard basis for the product topology.

\medskip

Consider any general dynamical-lexicographic ordering $\preceq=\preceq_{\leq_w,\Omega}$. We say that a point $x\in \R$ is \emph{relevant} if changing $\Omega(x)$ changes the ordering $\preceq$. Equivalently, $x$ is relevant if there exists a function $f\in\hr$ whose sign is determined by its value at $x$. Denote by $X=X_{\preceq}$ the set of all relevant points. The well-ordering $\leq_w$ on $\R$ induces a well-ordering on $X$, so that $X$ can be treated as a transfinite sequence.

\begin{proposition}\label{relevant}
    A point $x\in\R$ is relevant if and only if it cannot be approximated by the $\leq_w$-preceding points. 
\end{proposition}
\begin{proof}
    Suppose $x$ is a relevant point. Then there exists $f\in\hr$ whose $\preceq$-sign is determined by $f(x)$. In particular, $f(x)\neq x$. By continuity, $f(y)\neq y$ for all $y$ in some neighborhood of $x$. If any such $y$ precedes $x$, then the sign of $f$ would already be determined by $f(y)$, contradicting the assumption. Thus $x$ cannot be approximated by preceding points.

    Conversely, suppose $x$ is not a limit point of $\{y\in \R\mid y<_w x\}$. Then there exists a neighborhood $U=(x-a,x+a)$ disjoint from the set $\{y\in \R\mid y<_w x\}$ of preceding points. Let $f\in\hr$ be supported on $U$, for example
    \[
    f(y)=\begin{cases}
            y+\frac{a}{\pi}\cos\left(\tfrac{y-x}{a}\pi\right), & \text{if } y\in U, \\
            y, & \text{if } y\notin U.
    \end{cases}
    \]
    Then $f(y)=y$ for all $y<_w x$, while $f(x)\neq x$. Hence the sign of $f$ is determined at $x$, so $x$ is relevant.
\end{proof}

To study a general dynamical-lexicographic ordering $\preceq=\preceq_{\leq_w,\Omega}$, it suffices to restrict to the set of relevant points.

\begin{proposition}\label{relevant2}
Let $\preceq_i=\preceq_{\leq_w^{(i)},\Omega_i}$, $i=1,2$, be two general dynamical-lexicographic orderings, and let $X_i=X_{\preceq_i}$ be their sets of relevant points. Then $\preceq_1=\preceq_2$ if and only if $X_1=X_2$ as transfinite sequences and $\Omega_1\mid_{X_1}=\Omega_2|_{X_2}$.
\end{proposition}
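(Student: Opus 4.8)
The plan is to prove both directions of the equivalence. The backward direction (that matching relevant sequences and sign functions force the orderings to coincide) should follow from the idea that the ordering $\preceq_i$ is completely determined by the restriction of its data to the relevant points; the forward direction (extracting agreement of the relevant data from equality of the orderings) is where the real work lies.

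\medskip

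First I would prove the backward direction. Suppose $X_1 = X_2 =: X$ as transfinite sequences and $\Omega_1|_X = \Omega_2|_X$. To show $\preceq_1 = \preceq_2$, it suffices by Theorem~\ref{positivecone} to show that they have the same positive cone, i.e.\ that every $f \in \hr$ has the same sign under both orderings. Fix $f$, and consider the $\leq_w^{(1)}$-least point $x$ where $f(x) \neq x$. The sign of $f$ under $\preceq_1$ is governed by $\Omega_1$ evaluated at the $\leq_w^{(1)}$-least \emph{relevant} point $x^*$ at which $f$ is nonzero; the content of Proposition~\ref{relevant} is that the deciding point can always be taken to be relevant, since a non-relevant deciding point would be approximable by preceding points where $f$ (being continuous and nonzero at $x$) is also nonzero, contradicting minimality. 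Thus the sign of $f$ under each $\preceq_i$ is determined by $\Omega_i$ at the first point of $X_i$ lying in the support-type set $\{y : f(y) \neq y\}$, together with the comparison of $f(x^*)$ with $x^*$. Since $X_1 = X_2$ as ordered sequences and the sign functions agree there, both orderings read off the same deciding relevant point and the same sign, so $f$ has the same sign under both. Hence $\preceq_1 = \preceq_2$.

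\medskip

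For the forward direction, assume $\preceq_1 = \preceq_2$; I want to recover $X_1 = X_2$ and the agreement of the sign functions. The key observation is that the set of relevant points and their induced order can be reconstructed \emph{intrinsically} from the ordering itself, independently of the particular well-ordering and sign function presenting it. Concretely, for each relevant point $x$, Proposition~\ref{relevant} supplies a bump function $f$ supported on a small neighborhood of $x$ whose sign is decided at $x$; I would use such localized homeomorphisms to detect which points are relevant and to read off the order between two relevant points $x, x'$. For instance, given two relevant points with disjoint small neighborhoods, one constructs an $f$ that is a bump near $x$ and a bump near $x'$ arranged so that its sign flips according to which of $x, x'$ comes first in the well-ordering; comparing such $f$ to the identity in the common ordering $\preceq_1 = \preceq_2$ then forces $x <_w^{(1)} x' \iff x <_w^{(2)} x'$. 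Finally, the sign $\Omega_i(x)$ at a relevant point is recovered as the $\preceq_i$-sign of a bump function at $x$ pushing upward (say $f(x) > x$), and since the two orderings agree on this function, $\Omega_1(x) = \Omega_2(x)$.

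\medskip

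The main obstacle I anticipate is making the intrinsic reconstruction in the forward direction fully rigorous, particularly the construction of test homeomorphisms that isolate the comparison between two given relevant points while being neutral (identity) at all $\leq_w$-preceding relevant points. The delicate case is when the two relevant points are close together or when one is a $\leq_w$-limit approached by relevant points other than the ones under comparison; there one cannot simply use disjoint bumps, and care is needed to ensure the constructed $f$ genuinely has its sign decided at the intended point under \emph{both} well-orderings simultaneously. I expect this to be handled by choosing the bump supports small enough to avoid all interfering preceding relevant points, invoking Proposition~\ref{relevant} to guarantee such neighborhoods exist, and then arguing that agreement of $\preceq_1$ and $\preceq_2$ on the resulting finite family of test functions pins down both the order and the signs on $X_1 = X_2$.
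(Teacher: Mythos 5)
Your backward direction and your central device for the forward direction --- two disjoint bumps at candidate relevant points, with the second bump's direction chosen so that the resulting test function's sign flips according to which point is read first --- are exactly the paper's. The difference is architectural, and that is where your version has a genuine gap. You propose to reconstruct the relevant set, its induced order, and the signs \emph{intrinsically} from the ordering by pairwise comparisons. But each such comparison is only informative once you know where $\preceq_1$ and $\preceq_2$ each decide the sign of your test function: a bump supported near $x$ has its $\preceq_2$-sign decided at the $\leq_w^{(2)}$-least point of its support, which need not be $x$ at all if $x\notin X_2$ or if the $\leq_w^{(2)}$-predecessors of $x$ invade the support. So before any order or sign comparison can be extracted, you must prove $X_1=X_2$ \emph{as sets}, and you never say how; your closing paragraph flags the difficulty without resolving it. (Your worry about two relevant points being ``too close'' is, by contrast, vacuous: distinct points always admit disjoint neighborhoods, each separable from its own predecessors by Proposition~\ref{relevant}.)

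The paper closes this gap with a minimal-counterexample argument along the transfinite sequences: let $\alpha$ be the least ordinal at which $X_1$ and $X_2$ disagree, with corresponding points $x_1\neq x_2$. All relevant points preceding position $\alpha$ are then common to both orderings and can be avoided by both bump supports, so a single test function $f_1f_2^{\pm 1}$ provably has its $\preceq_1$-sign decided at $x_1$ and its $\preceq_2$-sign decided at $x_2$, and the choice of exponent forces these signs to disagree --- contradicting $\preceq_1=\preceq_2$. This one induction simultaneously yields equality of the underlying sets, of the induced orders, and (by the same trick with a single bump) of the sign functions. If you reorganize your forward direction around the first ordinal of disagreement rather than around arbitrary pairs of points, your construction goes through essentially verbatim.
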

\begin{proof}
    The ``if'' direction follows directly from the definition of relevant points.

    For the converse, assume $\preceq_1=\preceq_2$. Suppose, for contradiction, that $\alpha$ is the smallest ordinal such that the corresponding points in $X_1$ and $X_2$ differ. Denote these points by $x_1$ and $x_2$. By Proposition \ref{relevant}, let $U_1\ni x_1$ and $U_2\ni x_2$ be neighborhoods separating $x_1$ and $x_2$ from the preceding points, chosen so that $U_1$ and $U_2$ are disjoint. Let $f_1, f_2$ be functions supported on $U_1, U_2$ respectively, and set $f=f_1f_2^{-1}$ if $\Omega_1(x_1)=\Omega_2(x_2)$, or $f=f_1f_2$ otherwise. Then $f$ has different signs in $\preceq_1$ and $\preceq_2$, a contradiction. Thus $X_1=X_2$. A similar argument shows $\Omega_1\mid_{X_1}=\Omega_2|_{X_2}$.
\end{proof}

In particular, Proposition \ref{relevant2} shows that standard orderings are precisely those whose relevant points form an $\omega$-indexed sequence, justifying the inclusion
\[
\mathcal{S}\subsetneq \mathcal{G}
\]
in Theorem \ref{hierarchy}.

\begin{lemma}\label{relcountable}
    For any general dynamical-lexicographic ordering $\preceq=\preceq_{\leq_w,\Omega}$, the set $X=X_{\preceq}$ of relevant points is countable.
\end{lemma}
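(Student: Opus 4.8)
The plan is to show that the set $X$ of relevant points is countable by exploiting Proposition \ref{relevant}, which tells us exactly which points are relevant: a point $x$ is relevant precisely when it is \emph{not} a limit point of its $\leq_w$-predecessors. My strategy is to associate to each relevant point a rational open interval in an injective (or at least countable-to-one) fashion, and then conclude countability from the countability of $\qq$.

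First I would fix a relevant point $x \in X$. By Proposition \ref{relevant}, $x$ is isolated from the set $\{y : y <_w x\}$ of its predecessors, so there is a radius $a > 0$ with $(x-a,x+a) \cap \{y : y <_w x\} = \emptyset$. I would then choose rationals $p,q \in \Q$ with $x - a < p < x < q < x + a$, thereby assigning to $x$ a rational interval $I_x = (p,q)$ containing $x$ but disjoint from every $\leq_w$-predecessor of $x$. The key claim is that the map $x \mapsto I_x$ can be arranged to be injective on $X$. To see this, suppose $x \neq x'$ are two relevant points assigned the same interval $I = (p,q)$; without loss of generality $x <_w x'$. Then $x$ is a $\leq_w$-predecessor of $x'$, yet $x \in I = I_{x'}$, contradicting the defining property that $I_{x'}$ is disjoint from the predecessors of $x'$. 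Hence distinct relevant points receive distinct rational intervals, giving an injection $X \hookrightarrow \qq$, and therefore $X$ is countable.

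The main obstacle I anticipate is purely bookkeeping: one must verify that the rational interval really can be chosen to contain $x$ and simultaneously avoid \emph{all} predecessors, which is immediate from the open neighborhood supplied by Proposition \ref{relevant}, since any subinterval of $(x-a,x+a)$ is likewise disjoint from the predecessors. A subtlety worth stating carefully is the direction of the argument in the injectivity step: the contradiction uses that the \emph{later} point's interval excludes the \emph{earlier} point, so one should compare the two candidates with respect to $\leq_w$ and apply the disjointness property at the $\leq_w$-larger of the two. Once this is pinned down, no transfinite induction or deeper order-theoretic machinery is needed; the whole argument reduces to the separability of $\R$ encoded by the countable dense set $\qq$ of rational intervals. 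I would close by noting that this injection is exactly what forces $|X| \le |\qq| = \aleph_0$.
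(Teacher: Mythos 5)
Your argument is correct and is essentially the paper's own proof: the paper separates each relevant point from its $\leq_w$-predecessors by an element of a fixed countable basis of $\R$ and observes that each basis element can serve at most one relevant point, which is exactly your rational-interval injection, justified by the same comparison of the two candidates under $\leq_w$. The only addition in the paper is the remark that $X$ cannot be finite because it must be dense in $\R$, which matters only if one reads ``countable'' as ``countably infinite.''
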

\begin{proof}
    Recall that $\R$ is second-countable, and fix any countable basis $\{U_n\}$. Each relevant point $x$ is separated from the set $\{y\mid y<_w x\}$ of preceding elements by some neighborhood $U_{n(x)}\ni x$ from the basis. Clearly, each neighborhood can be used for at most one relevant point. Hence $X$ is at most countable. On the other hand, $X$ cannot be finite since it must be dense in $\R$.
\end{proof}

Let $(\preceq_n)_{n=1}^{\infty}$ be a sequence of orderings on $\LO(\hr)$ converging to some ordering $\preceq$. 
Equivalently, for every $f \in \hr$, the sign of $f$ in $\preceq_n$ eventually stabilizes to its sign in $\preceq$. 
We are particularly interested in the case of standard orderings $\preceq_n=\preceq_{\mathbf{x}^{(n)},\Omega_n}$. 
For simplicity, by passing to a subsequence if necessary, we may assume that each sequence $\mathbf{x}^{(n)}$ consists only of relevant points; in particular, it has no repetitions.

\begin{lemma}\label{stable}
    Let $\preceq_n=\preceq_{\mathbf{x}^{(n)},\Omega_n}$ be a convergent sequence of standard orderings. 
    Then each sequence $\bigl( x^{(n)}_m \bigr)_{n=1}^{\infty}$ is eventually constant. 
    Similarly, each sequence $\left( \Omega_n(m) \right)_{n=1}^{\infty}$ is eventually constant.
\end{lemma}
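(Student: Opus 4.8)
The plan is to prove both assertions simultaneously by induction on the index $m$, reducing at each stage to a statement about the \emph{leading} point and sign of a tail sequence. Suppose the two claims hold for all $j<m$, with stable values $x_j$ and $\epsilon_j$. Since there are only finitely many indices below $m$, I can fix a single threshold $N$ with $x^{(n)}_j=x_j$ and $\Omega_n(j)=\epsilon_j$ for all $j<m$ and all $n\ge N$. I then restrict attention to the (large) set of $f\in\hr$ that fix each of the finitely many points $x_1,\dots,x_{m-1}$. For such $f$ and $n\ge N$, the first $m-1$ comparison points are fixed and hence irrelevant, so the $\preceq_n$-sign of $f$ is governed entirely by the tail $(x^{(n)}_m,x^{(n)}_{m+1},\dots)$ with signs $(\Omega_n(m),\dots)$, and $x^{(n)}_m$ plays the role of the leading point. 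It therefore suffices to prove: a convergent sequence of such tail comparisons has eventually constant leading point and leading sign.

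For this normalized statement I first record how to read off the leading sign. If $b^{+}$ is a positive bump, i.e.\ a homeomorphism with $A_{b^+}$ a small interval avoiding $x_1,\dots,x_{m-1}$ and $B_{b^+}=\emptyset$, whose support contains the leading point, then its $\preceq_n$-sign equals $\Omega_n(m)$; by convergence this is eventually constant, which pins down the leading sign \emph{once the leading point is controlled}. To control the leading point, I show it converges to a single value $a\in[-\infty,+\infty]$. If instead two disjoint intervals $U,V$ (chosen to avoid the finite fixed set) each contained the leading point for infinitely many $n$ — which happens whenever the leading point has two distinct limit points, or accumulates at one finite point from both sides — I reach a contradiction using two test functions: one $f$ with $A_f=U,\ B_f=V$, and one $g$ with $A_g=U\cup V,\ B_g=\emptyset$. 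Tracing signs, $g$ has sign $\Omega_n(m)$ throughout, while $f$ has sign $\Omega_n(m)$ on the $U$-indices and $-\Omega_n(m)$ on the $V$-indices; letting $s,t$ be the limits of these two sign sequences forces $s=t$ and $t=-s$, hence $s=-s$, which is impossible.

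The remaining and hardest case is when the leading point converges to a single $a$ but approaches it strictly from one side (including $a=\pm\infty$), so it is not eventually constant; it then takes infinitely many distinct values monotonically approaching $a$. Here the bump trick is powerless, since a fixed positive bump assigns the same sign to all nearby leading points. The resolution is to exploit the explicit list of values $x^{(n)}_m$: having first established, via a \emph{one-sided} positive bump on the approach side, that $\Omega_n(m)$ is eventually a constant $\epsilon$, I select a strictly monotone subsequence $x^{(n_j)}_m\to a$ and build a single $f\in\hr$ that fixes $a$ together with the finite fixed set, equals the identity off a small one-sided neighborhood of $a$, and whose graph crosses the diagonal in an alternating pattern at these points, i.e.\ $f(x^{(n_j)}_m)>x^{(n_j)}_m$ for odd $j$ and $f(x^{(n_j)}_m)<x^{(n_j)}_m$ for even $j$. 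As each $x^{(n_j)}_m$ is the leading point, the sign of $f$ in $\preceq_{n_j}$ equals $\epsilon\cdot(-1)^{j+1}$ and therefore oscillates, contradicting convergence; taking $f=\id$ on a half-line rules out $a=\pm\infty$. I expect this alternating-diagonal construction to be the main obstacle, as it is the one step where the naive bump functions fail and one must use the precise values of the sequence rather than just their limiting location.

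Combining the cases, the leading point is eventually a constant real $x_m$, and a positive bump on a small neighborhood of $x_m$ (avoiding the finite fixed set) then has $\preceq_n$-sign $\Omega_n(m)$ for all large $n$, so the leading sign is eventually constant, say $\epsilon_m$. This closes the induction. The only routine care needed throughout is the placement of supports so that every test function fixes $x_1,\dots,x_{m-1}$, which is always possible since that set is finite, using one-sided neighborhoods in the event that the relevant limit $a$ coincides with one of these earlier points.
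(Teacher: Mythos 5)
Your proposal is correct and follows essentially the same strategy as the paper's proof: induction on the index $m$, a positive bump test function to stabilize the sign $\Omega_n(m)$, and an alternating test function built along a strictly monotone subsequence of leading points whose $\preceq_{n_j}$-sign oscillates, contradicting convergence. The differences are only organizational — you sort cases by the limit-point structure of the leading points (with a two-function trick for two accumulation sites) where the paper sorts by whether $x^{(n)}_k$ takes finitely or infinitely many values — so just make sure your enumeration also covers the case where the leading point equals its limit $a$ infinitely often while other values approach $a$ from one side; your alternating construction already handles it.
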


\begin{proof}
We proceed by induction on $m$.

Fix $k\in \N$ and assume that for all $m<k$, the sequences $x^{(n)}_m$ are eventually constant. 
After discarding finitely many initial terms, we may assume these sequences are constant for all $n$, and denote the constants by $x_m = x^{(n)}_m$, $1 \leq m < k$. 
By construction, each $x_m$ is distinct from every $x^{(n)}_k$.

\emph{Step 1: Stabilization of $\Omega_n(k)$.}  
Consider the test function $f_k$ defined so that $f_k(x_m) = x_m$ for $1 \leq m < k$, and $f_k(x)>x$ elsewhere. 
In particular, $f_k(x^{(n)}_k) > x^{(n)}_k$. 
Thus the $\preceq_n$-sign of $f_k$ is precisely $\Omega_n(k)$. 
Since the sequence of signs converges, the sequence $(\Omega_n(k))$ stabilizes.

\emph{Step 2: Stabilization of $x^{(n)}_k$.}  
Suppose first that $x^{(n)}_k$ takes infinitely many distinct values. 
Then it has a strictly monotone subsequence $(x^{(n_\ell)}_k)_{\ell}$. 
Choose disjoint neighborhoods $U_\ell \ni x^{(n_\ell)}_k$, such that $U_l\cap\{x_1,\dots,x_{k-1}\}=\emptyset$, and let $f_\ell$ be supported on $U_\ell$ as in Proposition \ref{relevant}, with $f_\ell(x)>x$ on $U_\ell$. 
Define the test function
\[
    f(x)=
    \begin{cases}
        f_\ell(x), & x \in U_\ell, \ \ell \ \text{even},\\
        f_\ell^{-1}(x), & x \in U_\ell, \ \ell \ \text{odd},\\
        x, & \text{otherwise}.
    \end{cases}
\]
Then, the $\preceq_{n_{\ell}}$-sign of $f$ is $\Omega_{n_{\ell}}(k)$ if $\ell$ is even, and $-\Omega_{n_{\ell}}(k)$ if $\ell$ is odd.

Since the sequence  $(\Omega_{n_{\ell}}(k))$ is eventually constant, for sufficiently large $\ell$, the signs of $f$ with respect to $\preceq_{n_\ell}$ alternate, contradicting convergence. 
Thus, this case is impossible.

Now suppose $x^{(n)}_k$ takes only finitely many values. 
At least one of these, say $x$, occurs infinitely often. 
Construct a test function $f$ with $f(x_m)=x_m$ for $m<k$, $f(x)>x$, and $f(x^{(n)}_k)<x^{(n)}_k$ whenever $x^{(n)}_k\neq x$. 
This is possible since only finitely many points are involved. 
Then $f$ has different signs depending on whether $x^{(n)}_k=x$ or not. 
Since the signs converge, $x^{(n)}_k$ must equal $x$ for all sufficiently large $n$. 
Hence $x^{(n)}_k$ stabilizes.
\end{proof}

\begin{corollary}\label{limgen}
The limit of a convergent sequence of standard orderings $\preceq_n$ is a general dynamical-lexicographic ordering.    
\end{corollary}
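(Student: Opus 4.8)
The plan is to use Lemma~\ref{stable} as the engine and assemble the limiting ordering point-by-point. Since the $\preceq_n$ converge, by passing to a subsequence we have arranged that each $\mathbf{x}^{(n)}$ lists only relevant points. Lemma~\ref{stable} tells us that for each fixed index $m$, both $x^{(n)}_m$ and $\Omega_n(m)$ are eventually constant in $n$; call the limiting values $x_m$ and $\omega(m)$. First I would define a candidate limit object from this data: the sequence $\mathbf{x}=(x_m)_{m=1}^{\infty}$ of limiting points together with the sign function $\omega$. Because distinct indices $m\neq m'$ eventually give distinct relevant points (we removed repetitions), the $x_m$ are pairwise distinct, so $\mathbf{x}$ is a genuine injective sequence and $\omega$ is well defined on its range.

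Next I would check that this data indeed determines the limit ordering $\preceq$. The key point is that convergence in the Sikora topology means: for each fixed $f\in\hr$, the $\preceq_n$-sign of $f$ is eventually equal to its $\preceq$-sign. So it suffices to show that $\preceq$ agrees with the general dynamical-lexicographic ordering $\preceq_{\leq_w,\Omega}$ built from $(x_m)$, where $\leq_w$ is the well-ordering whose initial $\omega$-segment is $x_1<_w x_2<_w\cdots$ (extended arbitrarily, e.g. by well-ordering the rest of $\R$) and $\Omega$ extends $\omega$. Given $f$, let $m$ be the least index with $f(x_m)\neq x_m$; I would argue that for all large $n$, the first index at which $f$ differs from the identity along $\mathbf{x}^{(n)}$ is also $m$, because the finitely many earlier points $x_1,\dots,x_{m-1}$ have stabilized to values where $f$ is the identity, and $x^{(n)}_m\to x_m$ with $f(x_m)\neq x_m$ forces $f(x^{(n)}_m)\neq x^{(n)}_m$ for large $n$ by continuity. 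Hence the $\preceq_n$-sign of $f$ is governed by $\Omega_n(m)\to\omega(m)$, matching the sign assigned by $\preceq_{\leq_w,\Omega}$.

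The main obstacle I expect is the subtlety in the previous step: a function $f$ may be the identity at all of $x_1,x_2,\dots$ yet nontrivial on $\R$, so I must make sure the comparison is being made correctly at the first \emph{relevant} disagreement and that the tail of $\mathbf{x}^{(n)}$ beyond the stabilized prefix cannot sabotage the limiting sign. The clean way is to observe that if $f$ fixes every $x_m$, then for each fixed $m$ the point $x^{(n)}_m=x_m$ is fixed by $f$ for large $n$, so no single $f$ can have its sign determined arbitrarily late; more carefully, for a given $f$ one only needs finitely many indices to stabilize before the least-disagreement index $m$ is reached, which is exactly what Lemma~\ref{stable} provides index-by-index. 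I would phrase this as: for each $f$ there is an $m=m(f)$ and an $N$ such that for $n\geq N$ the $\preceq_n$-comparison of $f$ against the identity is decided at $x^{(n)}_m=x_m$ with sign $\Omega_n(m)=\omega(m)$, so the limiting sign exists and is given by a single consistent general dynamical-lexicographic rule. Finally I would invoke Proposition~\ref{relevant2} to confirm that the resulting data $(\mathbf{x},\omega)$ specifies a well-defined ordering in $\mathcal{G}$, completing the proof that $\preceq=\preceq_{\leq_w,\Omega}\in\mathcal{G}$.
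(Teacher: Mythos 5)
Your construction captures only the first $\omega$-block of the limit ordering, and this is exactly where the proof breaks. The set $\{x_m\}_{m\in\N}$ of stabilized points need not be dense in $\R$: this is the whole reason the corollary asserts membership in $\mathcal{G}$ rather than in $\mathcal{S}$. (The introduction's example of a sequence $\mathbf{x}$ of positive numbers followed by a sequence $\mathbf{y}$ of negative numbers shows how a limit can fail to have a dense initial $\omega$-segment.) When $\{x_m\}$ is not dense, there exist nontrivial $f\in\hr$ supported in the complement of $\overline{\{x_m\}}$, hence fixing every $x_m$. For such $f$ your claimed statement --- that there is an $m=m(f)$ and $N$ with the $\preceq_n$-comparison decided at $x^{(n)}_m=x_m$ for $n\geq N$ --- is false: the first index $k$ with $f(x^{(n)}_k)\neq x^{(n)}_k$ tends to infinity with $n$, so the limiting sign of $f$ is genuinely determined by data beyond the first $\omega$ indices. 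Declaring the well-ordering to be ``extended arbitrarily'' past $x_1,x_2,\dots$ therefore does not reproduce the limit ordering; the arbitrary extension would assign such $f$ a sign unrelated to $\lim_n \preceq_n$.

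The paper closes this gap with a transfinite recursion that your proposal is missing. After extracting the first $\omega$-block $X_\omega=(x_m)$, if $X_\omega$ is not dense one deletes $\overline{X_\omega}$ from each sequence $\mathbf{x}^{(n)}$, reruns the stabilization argument of Lemma~\ref{stable} on the pruned sequences to obtain the next $\omega$-block of limit points and signs, and iterates through limit ordinals until the accumulated set is dense (which happens at some countable ordinal, by Lemma~\ref{relcountable}-type counting). Only then is the ordering extended arbitrarily, and the verification that $\preceq_n\to\preceq_{X,\Omega}$ is done relative to the full transfinite sequence. Your first $\omega$ steps are correct and coincide with the paper's first stage, but without the recursion the argument only proves the corollary under the additional hypothesis that $\{x_m\}$ is dense, i.e.\ that the limit is already standard.
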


\begin{proof}
We construct the limit ordering $\preceq$ explicitly as a general dynamical-lexicographic ordering $\preceq_{X,\Omega}$ by recursively building a dense transfinite sequence $X$ of relevant points and a sign function $\Omega:X\to \{+,-\}$.

By Lemma \ref{stable}, each sequence $\bigl(x^{(n)}_m\bigr)_{n=1}^{\infty}$ eventually stabilizes. 
Let $x_m$ denote its limit, and set $X_\omega=(x_m)_{m\in \N}$. 
Similarly, each $\Omega_n(m)$ stabilizes, and we define $\Omega(x_m)$ to be this eventual constant.

Now let $\alpha$ be a limit ordinal, and suppose the initial $\alpha$-segment $X_\alpha$ of $X$, and $\Omega:X_\alpha\to\{+,-\}$ have been defined through the recursion. 

If $X_\alpha$ is dense in $\R$, we terminate the recursion and extend $X$ arbitrarily to a well-ordered dense sequence in $\R$, together with an arbitrary extension of $\Omega$. 
This defines the ordering $\preceq=\preceq_{X,\Omega}$.

If $X_\alpha$ is not dense, remove $\overline{X_\alpha}$ from the sequences $\bigl(x^{(n)}_m\bigr)_{m=1}^{\infty}$. 
These sequences remain infinite since they are dense in $\R$. 
By the same argument as in Lemma \ref{stable}, the adjusted sequences $\bigl(x^{(n),\alpha}_m\bigr)_{n=1}^{\infty}$ and $\bigl(\Omega^{\alpha}_n(m)\bigr)_{n=1}^{\infty}$ are eventually constant. 
We define 
\[
x_{\alpha+m-1}=\lim_{n\to\infty} x^{(n),\alpha}_m
\]
and 
\[
\Omega(x_{\alpha+m-1})=\lim_{n\to\infty}\Omega^\alpha_n(m)
\]
to be the corresponding limits.

This process terminates at some countable ordinal, since the sequences contain only countably many points, and at each step of the recursion we remove points from the sequences.

Finally, we check that $\preceq$ is indeed the limit of $(\preceq_n)$. 
Let $f\in\hr$ be nontrivial, and let $x\in X$ be the first point with $f(x)\neq x$. 
Such $x$ exists since $X$ is dense. 
Write $x=x_{\alpha+m-1}$ for some $\alpha,m$. 
For all $\beta<\alpha$, we have $f(x_\beta)=x_\beta$, so removing $X_\alpha$ does not affect the $\preceq_n$-signs of $f$. 
For large $n$, the sign of $f$ is determined at $x^{(n),\alpha}_m=x_{\alpha+m-1}$, since $f(x^{(n),\alpha}_k)=x_{\alpha+k-1}$ for $k<m$. 
Thus the $\preceq_n$-signs of $f$ eventually agree with the $\preceq$-sign, proving $\preceq_n \to \preceq$.
\end{proof}

Theorem \ref{seqcl} then follows from
\begin{proposition}
    Each general dynamical-lexicographic ordering is a limit point of standard orderings.
\end{proposition}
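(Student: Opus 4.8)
The plan is to reduce the statement to a purely combinatorial problem about reorderings of a countable set, and then to solve it with an explicit ``finite head, arbitrary tail'' construction. Fix a general dynamical-lexicographic ordering $\preceq=\preceq_{\leq_w,\Omega}$. By Proposition \ref{relevant2} it is completely determined by its set of relevant points $X=X_\preceq$, well-ordered by $\leq_w$ with some countable order type, together with the restriction $\Omega|_X$; by Lemma \ref{relcountable} the set $X$ is countable and, being the set of relevant points, dense in $\R$. Recall that convergence $\preceq_n\to\preceq$ in the Sikora topology means precisely that for every $f\in\hr$ the $\preceq_n$-sign of $f$ eventually equals its $\preceq$-sign, and that the $\preceq$-sign of $f$ is read off at the $\leq_w$-least point moved by $f$; this point lies in $X$ (its $\leq_w$-predecessors are all fixed, so by Proposition \ref{relevant} it is relevant). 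So it suffices to produce standard orderings whose determining point for each fixed $f$ eventually coincides, sign and all, with this $\leq_w$-least moved point.

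Next I would fix once and for all a bijective enumeration $e\colon\N\to X$ and, for each $n$, set $S_n=\{e(0),\dots,e(n-1)\}$. I define the $n$-th approximating sequence $\mathbf{x}^{(n)}$ by first listing the finite set $S_n$ in increasing $\leq_w$-order and then appending the remaining points $X\setminus S_n$ in $e$-order. Since $\mathbf{x}^{(n)}$ is a permutation of the dense set $X$, it is itself dense, so $\preceq_n\coloneq\preceq_{\mathbf{x}^{(n)},\Omega_n}$ is a genuine standard ordering, where the sign function $\Omega_n$ is obtained simply by transporting $\Omega$ along the enumeration, i.e. $\Omega_n(m)=\Omega\bigl(x^{(n)}_m\bigr)$. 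This transport is the device that makes the sign bookkeeping trivial: as soon as the determining point of $f$ under $\preceq_n$ turns out to be the same point of $X$ as under $\preceq$, the two signs automatically agree.

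Finally I would verify convergence. Fix $f\neq\id$ and let $c$ be the $\leq_w$-least point of $X$ moved by $f$. Once $n$ is large enough that $c\in S_n$, I claim $c$ is the first point of $\mathbf{x}^{(n)}$ moved by $f$: the head of $\mathbf{x}^{(n)}$ is listed in $\leq_w$-order and every point $\leq_w$-below $c$ is fixed by $f$ by minimality of $c$, so $c$ is the first moved point encountered inside the head; and the entire tail $X\setminus S_n$ comes after the head. Hence the $\preceq_n$-sign of $f$ is read at $c$ with sign $\Omega(c)$, matching the $\preceq$-sign, so $\preceq_n\to\preceq$. Together with Corollary \ref{limgen} this yields Theorem \ref{seqcl}. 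I expect the only real subtlety to be exactly the point this construction is designed to finesse: the naive idea of letting the $\omega$-orderings converge to $\leq_w$ one pair of points at a time fails, because the fixed determining point $c$ must \emph{simultaneously} precede the possibly infinitely many $\leq_w$-larger moved points of $f$, and pairwise convergence provides no single stage at which this occurs. Sorting a finite initial block by $\leq_w$ and dumping everything else behind it sidesteps this, forcing at a single stage $n$ that $c$ precedes every other moved point at once.
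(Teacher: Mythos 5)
Your proposal is correct and is essentially the paper's own argument: both take an exhausting chain of finite subsets of the countable set of relevant points, list each finite set as an initial block sorted by $\leq_w$, append the rest of $X$ arbitrarily, transport $\Omega$ along the enumeration, and observe that once the determining point of $f$ enters the finite block the $\preceq_n$-sign agrees with the $\preceq$-sign. The only cosmetic difference is that you fix a specific order for the tail and spell out why the determining point lies in $X$ and why the approximants are dense, which the paper leaves implicit.
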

\begin{proof}
    Let $\preceq$ be a dynamical-lexicographic ordering, let $X$ be its transfinite sequence of relevant numbers, and let $\Omega:\R\to \{+,-\}$ be its sign function.

    Since $X$ is countable, there is an expanding sequence $F_1\subset F_2\subset F_3\subset\dots$ of finite sets such that $\bigcup F_n=X$.

    We define a sequence $\mathbf{x}_n=(x_m^{(n)})$ by begining with the elements of $F_n$ ordered the same way they are in $X$, and by extending it arbitrarily with the remaining elements from $X\setminus F_n$. We also define $\Omega_n(m)=\Omega(x_m^{(n)})$.

Then the sequence $(\preceq_{\mathbf{x}_n,\Omega_n})$ of standard orderings converges to the ordering $\preceq$. Indeed, let $f\in\hr$ be a non-trivial function, and let its $\preceq$-sign be determined at $x\in X$. Then, whenever $x\in F_n$, its $\preceq_n$-sign is also determined at $x$: the poins that preced $x$ in $F_n$ also preced $x$ in $X$. So $\preceq$-, and $\preceq$-signs of $f$ agree, since $\Omega$ and $\Omega_n$ agree.    
\end{proof}

	\section{Dynamical realization of left-ordered groups}
\label{sec:dynre}

We recall the classical dynamical orderability criterion for countable groups. For a similar criterion for uncountable groups, we refer to \cite{KopMed}*{Theorem 3.4.1}.
\begin{proposition}\label{dynre}
A countable group $G$ is left-orderable if and only if it acts faithfully on the real line by orientation-preserving homeomorphisms.   
\end{proposition}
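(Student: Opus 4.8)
The plan is to prove both directions of the equivalence in Proposition \ref{dynre}. The backward direction is the easy one: if $G$ acts faithfully by orientation-preserving homeomorphisms, then $G$ embeds into $\hr$, which we already know is left-orderable (indeed, the whole paper is built on this). A subgroup of a left-orderable group is left-orderable, simply by restricting the positive cone, so $G$ inherits a left-ordering. This direction requires almost nothing beyond the observation that left-orderability passes to subgroups.

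For the forward direction, I would give the standard \emph{dynamical realization} construction. Suppose $G$ is countable and left-orderable with left-ordering $\preceq$. First I would enumerate $G = \{g_0 = 1, g_1, g_2, \dots\}$ and build an order-preserving injection $t \colon G \to \R$ by induction: set $t(g_0) = 0$, and having placed $t(g_0), \dots, t(g_{n-1})$, place $t(g_n)$ at a real number that respects the $\preceq$-order relative to the already-placed points. The standard bookkeeping here is to place $t(g_n)$ so that the embedding does not leave bounded gaps unfilled in an uncontrolled way; a common recipe is to put $t(g_n)$ midway between its $\preceq$-neighbors among $g_0, \dots, g_{n-1}$ if it lies between two of them, and at distance $1$ beyond the current maximum or minimum if it is a new extremum. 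This yields an order-embedding $t$ whose image $t(G)$ is order-isomorphic to $(G, \preceq)$.

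Next I would define the action. The group $G$ acts on itself by left multiplication, and this action is by $\preceq$-order-preserving bijections (left-invariance of $\preceq$ is exactly what guarantees this). Transporting along $t$, each $g \in G$ acts on the countable set $t(G) \subset \R$ by the order-preserving bijection $t(h) \mapsto t(gh)$. The task is then to extend this order-preserving action on $t(G)$ to a continuous orientation-preserving action on all of $\R$. One extends $g$ affinely (or by any fixed increasing homeomorphism) across each gap in the complement $\R \setminus \overline{t(G)}$, and takes limits at points of $\overline{t(G)} \setminus t(G)$; the resulting map is a well-defined orientation-preserving homeomorphism of $\R$, and the assignment $g \mapsto \rho(g)$ is a homomorphism because it agrees with left multiplication on the dense-in-its-closure set $t(G)$ and both sides are continuous. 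Faithfulness is immediate: if $g \neq 1$ then $g \cdot 1 = g \neq 1$ in $G$, so $\rho(g)$ moves $t(1) = 0$ to $t(g) \neq 0$, hence $\rho(g) \neq \id$.

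The main obstacle is the extension step, namely verifying that extending across gaps produces genuine homeomorphisms that compose correctly and depend continuously on the points of $\overline{t(G)}$. The subtlety is that $t(G)$ need not be dense and its closure may have complicated gap structure, so one must check that the chosen gap-filling is compatible with the group operation — that $\rho(g)\rho(h)$ and $\rho(gh)$ agree not merely on $t(G)$ but everywhere, which follows once one fixes a canonical (e.g. affine) way to map gap to corresponding gap and observes that $g$ sends the gap to the left of $t(h)$ to the gap to the left of $t(gh)$. This is entirely routine but deserves a careful word; since the construction is classical (see \cite{DNR}*{Proposition 1.1.8}), I would present it concisely and refer the reader there for the full technical verification.
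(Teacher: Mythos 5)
Your proof is correct and follows essentially the same route as the paper: the easy direction via left-orderability of subgroups of $\hr$, and the forward direction via the standard dynamical realization with the same midpoint/extremum placement of $t(g_n)$ and extension of the left-multiplication action across gaps. Your additional care about the gap-filling being compatible with the group operation is a reasonable elaboration of a point the paper's sketch passes over quickly.
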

\begin{proof}
    The "if" part trivially holds since the group $\hr$ is left-orderable.

    We sketch the "only if" part below:

    Suppose that $G$ is equipped with a left-ordering $\preceq$. Let $\{g_n\}_{n=1}^{\infty}$ be the enumeration of the elements of $G$ with $g_1=1$ being the trivial element. We build the function $t:G\to \R$ recursively in the following way: set $t(1)=0$, assuming $t(g_1),\dots, t(g_n)$ has been already defined, if $g_{n+1}$ is the biggest (respectively smallest) among $g_1,\dots, g_{n+1}$ put $t(g_{n+1})=\max\{t(g_1),\dots,t(g_n)\}+1$ (respectively $t(g_{n+1})=\min\{t(g_1),\dots,t(g_n)\}-1$); if $g_i\prec g_{n+1}\prec g_j$, and no other $g_k$ is between $g_i,g_j$, set $t(g_{n+1})=\frac12 (t(g_i)+t(g_j)$. 

    Note that $G$ acts naturally on $t(G)$ by $g \cdot t(h)=t(gh)$. This action extends continuously on the closure of $t(G)$, and linearly on $\R$. The resulting action completes the proof.
    
\end{proof}

Note that the action constructed in the proof of Proposition \ref{dynre}, known as the dynamical realization of the group $G$, allows to recover the ordering $\preceq$ by looking at the value $g(0)=t(g)$: $g\succ 1$ if and only if $g(0)>0$. Thus, we can see the dynamical realization as an order-preserving embedding of $G$ into $\hr$ with some standard ordering $\preceq_{\mathbf{x},\Omega}$ with $x_1=0$ and $\Omega(1)=+$. This highlights the importance of standard orderings beyond the proof of the orderability of $\hr$.

	\section{Typical orderings}

\label{sec:proof1}

\begin{proposition}\label{gentyp}
Every general dynamical-lexicographic ordering is typical.
\end{proposition}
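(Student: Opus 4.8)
The plan is to unwind the definition of the $\preceq_{\leq_w,\Omega}$-sign of a single homeomorphism and observe that it refers to $f$ only through the sets $A_f$ and $B_f$, the remaining data $\leq_w$ and $\Omega$ being fixed once the ordering is chosen. Comparing a nontrivial $f\in\hr$ against the identity $1=\id$ means comparing their values at the $\leq_w$-least point $x$ where $f(x)\neq x$; such a point exists because $f\neq\id$ makes the set $\{x\in\R:f(x)\neq x\}$ nonempty and $\leq_w$ is a well-ordering. By the definition of the ordering, $f\succ 1$ holds exactly when either $f(x)>x$ and $\Omega(x)=+$, or $f(x)<x$ and $\Omega(x)=-$.

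The crucial observation I would record is the identity
\[
\{x\in\R:f(x)\neq x\}=A_f\cup B_f,
\]
valid for every $f\in\hr$, since $A_f$ and $B_f$ partition the non-fixed points according to whether the graph lies above or below the diagonal. This shows first that the $\leq_w$-least non-fixed point $x$ depends on $f$ only through $A_f\cup B_f$ (together with the fixed well-ordering); second, that the dichotomy $f(x)>x$ versus $f(x)<x$ is precisely whether $x\in A_f$ or $x\in B_f$; and third, that $\Omega(x)$ belongs to the fixed data of the ordering. Hence the entire recipe producing the sign of $f$ factors through the pair $(A_f,B_f)$.

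To conclude I would take $f,g\in\hr$ with $A_f=A_g$ and $B_f=B_g$. Then $A_f\cup B_f=A_g\cup B_g$, so $f$ and $g$ share the same $\leq_w$-least non-fixed point $x$; at that point $x\in A_f\iff x\in A_g$ and $x\in B_f\iff x\in B_g$, so $f(x)-x$ and $g(x)-x$ have the same sign; and $\Omega(x)$ is common to both. Therefore $f$ and $g$ receive the same $\preceq_{\leq_w,\Omega}$-sign, which is exactly the assertion that $\preceq_{\leq_w,\Omega}$ is typical. I do not expect a genuine obstacle here: the statement is a direct unwinding of the definitions, and the only point deserving a word of care is the existence of the $\leq_w$-least non-fixed point, which is guaranteed by well-ordering. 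One could additionally note, via Proposition \ref{relevant}, that this least point is automatically relevant, but this is not needed for the argument.
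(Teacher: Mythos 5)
Your proposal is correct and follows essentially the same route as the paper's proof: both identify the set of non-fixed points of $f$ with $A_f\cup B_f$, locate the $\leq_w$-least such point, and observe that the sign is then read off from whether that point lies in $A_f$ or $B_f$ together with the fixed data $\Omega$. No gaps.
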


\begin{proof}
Let $\preceq=\preceq_{\leq_w,\Omega}$ be a general dynamical-lexicographic ordering, and let $f\in\hr$ be a non-trivial function.

The sign of $f$ is determined at some point $x\in\R$. In particular, this means that $f(y)=y$ for all $y<_w x$, while $f(x)\neq x$. Equivalently, $y\notin A_f\cup B_f$ for all $y<_w x$, and $x\in A_f\cup B_f$.

Now take $g\in\hr$ with $A_g=A_f$ and $B_g=B_f$. Then $y\notin A_g\cup B_g$ for all $y<_w x$, and $x\in A_g\cup B_g$. Moreover, $g(x)>x$ if and only if $f(x)>x$. Thus, $g$ and $f$ have the same sign with respect to $\preceq$. Hence $\preceq$ is typical.
\end{proof}

In particular, Proposition \ref{gentyp} implies that every standard ordering is typical.

\begin{lemma}\label{tclosed}
The set $\mathcal{T}$ of all typical orderings is closed.
\end{lemma}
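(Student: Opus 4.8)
The plan is to argue entirely within the product-topology model of $\LO(\hr)$ described earlier in Section~\ref{sec:top}. Recall that each ordering $\preceq$ is identified with its positive cone $P=\{f\in\hr: f\succ 1\}$, i.e. with a point of $\{0,1\}^{\hr}$, and that the Sikora topology is the subspace topology inherited from the product topology. In this model, for each fixed $f\in\hr$ the evaluation map $e_f:\LO(\hr)\to\{0,1\}$, defined by $e_f(\preceq)=1$ if $f\succ 1$ and $e_f(\preceq)=0$ otherwise, is simply a coordinate projection and is therefore continuous.

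The key observation is that typicality is a \emph{conjunction} of conditions, each of which involves only two coordinates. For every pair $f,g\in\hr$ satisfying $A_f=A_g$ and $B_f=B_g$, let
\[
C_{f,g}=\{\preceq\,\in\LO(\hr) : e_f(\preceq)=e_g(\preceq)\}
\]
be the set of orderings in which $f$ and $g$ have the same sign. By the definition of typicality, an ordering lies in $\mathcal{T}$ precisely when it satisfies this agreement for \emph{all} such pairs, so
\[
\mathcal{T}=\bigcap_{\substack{A_f=A_g\\ B_f=B_g}} C_{f,g}.
\]

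First I would show that each $C_{f,g}$ is closed. The condition $e_f(\preceq)=e_g(\preceq)$ says that the pair $\bigl(e_f(\preceq),e_g(\preceq)\bigr)$ lies in the diagonal $\{(0,0),(1,1)\}\subset\{0,1\}^2$. Thus $C_{f,g}$ is the preimage of this diagonal under the continuous map $\preceq\mapsto\bigl(e_f(\preceq),e_g(\preceq)\bigr)$. Since $\{0,1\}^2$ carries the discrete topology, the diagonal is clopen, and hence $C_{f,g}$ is clopen in $\LO(\hr)$. Being an arbitrary intersection of closed sets, $\mathcal{T}$ is then closed; equivalently, its complement $\LO(\hr)\setminus\mathcal{T}=\bigcup\bigl(\LO(\hr)\setminus C_{f,g}\bigr)$ is a union of open sets.

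I do not expect a genuine obstacle here; the entire content is that each typicality constraint depends on only finitely many coordinates of the product space. The one point requiring care is purely logical rather than analytic: one must phrase typicality as a condition universally quantified over all pairs with $A_f=A_g$ and $B_f=B_g$, so that $\mathcal{T}$ appears as an \emph{intersection} (not a union) of the sets $C_{f,g}$. Although the family of such pairs is uncountable, this causes no difficulty, since arbitrary intersections of closed sets remain closed. (The degenerate pair $f=g=\id$, for which $A_f=B_f=\varnothing$, contributes a vacuously satisfied constraint and may be ignored.)
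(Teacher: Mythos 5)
Your proof is correct and is essentially the paper's argument in dual form: the paper shows the complement of $\mathcal{T}$ is open by exhibiting, for each non-typical $\preceq$, a witnessing pair $f,g$ and the corresponding subbasic open set of orderings in which $f$ and $g$ have opposite signs, which is exactly the complement of your clopen set $C_{f,g}$. Writing $\mathcal{T}=\bigcap C_{f,g}$ over all pairs with $A_f=A_g$ and $B_f=B_g$ is just the intersection-of-closed-sets phrasing of the same two-coordinate observation, and your handling of the degenerate pair $f=g=\id$ is fine.
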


\begin{proof}
We prove that its complement is open. Let $\preceq$ be a non-typical ordering. Then there exist functions $f,g \in \hr$ with $A_f=A_g$ and $B_f=B_g$ such that 
\begin{equation}\label{f1g}
f \prec 1 \prec g.
\end{equation}
Consider the set $U_{f,g^{-1}}$ of all orderings satisfying \eqref{f1g}. By definition of the topology on $\LO(\hr)$, this set is open. Moreover, it contains no typical orderings, since every ordering satisfying \eqref{f1g} is non-typical. Thus, $U_{f,g^{-1}}$ is an open neighborhood of $\preceq$ contained in $\LO(\hr)\setminus\mathcal{T}$. Hence $\LO(\hr)\setminus\mathcal{T}$ is open, and $\mathcal{T}$ is closed.
\end{proof}

\begin{lemma}\label{AnB}
Let $f_1,\dots,f_n \in \hr$ be positive functions with respect to some typical ordering $\preceq$. Denote $A_i=A_{f_i}$ and $B_i=B_{f_i}$. Then
\[
\bigcup_{i=1}^n A_i \;\neq\; \bigcup_{i=1}^n B_i.
\]
\end{lemma}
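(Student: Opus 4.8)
The plan is to argue by contradiction: assume $\bigcup_{i=1}^n A_i = \bigcup_{i=1}^n B_i =: C$ and derive a contradiction with the positivity of the $f_i$. The guiding principle, isolated from the case $n=2$, is that typicality lets me conclude that two functions share a sign whenever they share their $(A,B)$ data; so the goal is to manufacture, out of the $f_i$, a \emph{positive} group element and a \emph{negative} group element with the same pair $(A,B)$, which is absurd.

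First I would record two easy consequences of the standing assumption. Since each point of $C$ lies in some $A_i$ and in some $B_j$, while $A_k\cap B_k=\emptyset$ for every $k$, no point can lie in all the $A_i$ nor in all the $B_i$; hence $\bigcap_i A_i=\bigcap_i B_i=\emptyset$. The base case $n=1$ is then immediate: $A_1=B_1$ together with $A_1\cap B_1=\emptyset$ forces $f_1=\id$, contradicting $f_1\succ 1$. The case $n=2$ is the model for everything else: the covering identities force $A_1=B_2$ and $A_2=B_1$, so $f_2$ and $f_1^{-1}$ have identical $(A,B)$ data; by typicality they share a sign, yet $f_1\succ 1$ gives $f_1^{-1}\prec 1$ while $f_2\succ 1$ — a contradiction.

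For general $n$ I would run an induction that \emph{merges} two of the generators into one, lowering $n$ while preserving the hypothesis. Because $\preceq$ is typical, I am free to replace each $f_i$ by any homeomorphism with the same $(A_i,B_i)$ without changing its sign, and I would use this freedom to choose convenient representatives. The aim is to locate a pair of indices $i,j$ that is \emph{non-crossing}, meaning $A_i\cap B_j=A_j\cap B_i=\emptyset$, and to realize a single positive function $f$ with $A_f=A_i\cup A_j$ and $B_f=B_i\cup B_j$ as a product of disjointly supported positive representatives; replacing the pair $f_i,f_j$ by $f$ leaves both unions $\bigcup A$ and $\bigcup B$ unchanged and drops the count to $n-1$, so the inductive hypothesis applies and yields the contradiction.

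The main obstacle is precisely this merging step, where the overlap geometry bites. The equality $\bigcup A_i=\bigcup B_i$ forces the $A$-sets and $B$-sets of \emph{distinct} generators to cross, so a globally non-crossing, disjointly supported pair need not exist, and the data of a genuine product $f_if_j$ is not simply $(A_i\cup A_j,\,B_i\cup B_j)$ once supports overlap. I expect to handle this by working locally: passing to the finite partition of $C$ generated by $A_1,\dots,A_n,B_1,\dots,B_n$, on whose pieces the incidence pattern is constant, and performing the merge piecewise, or, failing a clean merge, by directly assembling a positive word and a negative word with equal $(A,B)$ data from disjoint bumps placed in the crossing regions $A_i\cap B_j$. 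Controlling these products' up- and down-sets, and verifying that the merged element is genuinely positive (a product of positives) rather than merely pointwise comparable to $\id$, is the delicate point; the $n=2$ computation and the emptiness of $\bigcap_i A_i$ and $\bigcap_i B_i$ are the two footholds I would lean on.
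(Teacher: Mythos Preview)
Your base cases $n=1$ and $n=2$ are clean and correct, and the overall contradiction scheme (build a positive product and a negative product with the same $(A,B)$ data) is exactly the right target. The gap is in the inductive ``merging'' step, and it is not a technicality but the whole content of the lemma.

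First, even when a non-crossing pair $i,j$ (in your sense $A_i\cap B_j=A_j\cap B_i=\emptyset$) exists, you cannot in general realise a single element with $A=A_i\cup A_j$ and $B=B_i\cup B_j$ as a product of \emph{disjointly supported} representatives: the supports $A_i\cup B_i$ and $A_j\cup B_j$ are fixed by the $(A,B)$ data, and non-crossing does not make them disjoint (they can still overlap in $A_i\cap A_j$ or $B_i\cap B_j$). Once supports overlap, the $(A,B)$ data of a product is not the union of the factors' data; e.g.\ for $x\in A_j\setminus(A_i\cup B_i)$ one has $g_j(x)>x$ but $g_j(x)$ may land in $B_i$, and then $g_ig_j(x)$ can drop below $x$. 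Second, the equality $\bigcup A_i=\bigcup B_i$ actively obstructs the existence of such pairs, as you yourself note; there is no reason a non-crossing pair should exist at all for $n\ge 3$. Your two fallbacks --- working on the partition generated by the $A_i,B_i$, or ``directly assembling'' two words with equal $(A,B)$ data from bumps in the crossing regions --- are restatements of the problem rather than solutions: the difficulty is precisely to control the up/down sets of a composition when the pieces interact.

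The paper does not induct on $n$. It takes all $n$ functions at once, decomposes each as $f_i=f_i^+\circ f_i^-$ with $f_i^+\ge\id$ and $f_i^-\le\id$, and then replaces every $f_i^-$ by a uniformly ``shallow'' $g_i^-(x)=\max\{f_i^-(x),\gamma(x)\}$ for a single auxiliary $\gamma\in\hr$ with $A_\gamma=\emptyset$, $B_\gamma=A$. The point of $\gamma$ is quantitative: one introduces $\Theta(x,t)$, the value of the composition when every downward move is exactly $t$, and for each $x\in A$ finds the threshold $t_x>0$ at which $\Theta(x,t_x)=x$; choosing $\gamma$ so that $x-\gamma(x)$ stays below $t_x$ along the whole orbit forces the full product $g=g_1\cdots g_n$ to satisfy $g(x)>x$ on all of $A$, i.e.\ $A_g=A$, $B_g=\emptyset$. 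A symmetric construction gives $h$ with $A_h=\emptyset$, $B_h=A$, and since each $g_i,h_i$ shares $(A_i,B_i)$ with $f_i$, typicality makes $g,h$ both positive --- the contradiction you were aiming for. The missing idea in your proposal is exactly this quantitative control (the function $\gamma$ and the threshold $t_x$) that tames the interaction of overlapping supports; without it the merging cannot be carried out.
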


\begin{proof}
Assume, for contradiction, that
\begin{equation}\label{l1}
\bigcup_{i=1}^n A_i = \bigcup_{i=1}^n B_i,
\end{equation}
and denote this common set by $A$.

We aim to construct functions $g_1,\dots,g_n,h_1,\dots,h_n \in \hr$ with 
\[
A_i = A_{g_i} = A_{h_i}, \qquad B_i = B_{g_i} = B_{h_i},
\]
such that $g \coloneq g_1 \circ \dots \circ g_n$ and $h \coloneq h_1 \circ \dots \circ h_n$ satisfy
\[
A_g = B_h = A, \qquad B_g = A_h = \emptyset.
\]
Under these conditions, each $g_i$ and $h_i$ has the same sign as $f_i$ in the typical ordering $\preceq$, and so all $g_i,h_i$ are positive. Consequently, both $g$ and $h$ are positive as products of positive elements. This contradicts \eqref{l1}, since $g$ and $h$ must have opposite signs: indeed $A_g=A_{h^{-1}}=B_h$ and $B_g=B_{h^{-1}}=A_h$.

We decompose each $f_i$ as
\[
f_i = f_i^{+} \circ f_i^{-} = f_i^{-} \circ f_i^{+},
\]
where
\[
f_i^{+}(x) = \max\{f_i(x),x\}, \qquad f_i^{-}(x) = \min\{f_i(x),x\}.
\]
Note that $f_i^+,f_i^- \in \hr$ with $A_{f_i^{+}}=A_i$, $B_{f_i^{+}}=\emptyset$, while $A_{f_i^{-}}=\emptyset$, $B_{f_i^{-}}=B_i$.

Define
\begin{equation}\label{f}
f \coloneq f_1 \circ \dots \circ f_n
= f_1^+ \circ f_1^- \circ f_2^+ \circ f_2^- \circ \dots \circ f_n^+ \circ f_n^-,
\end{equation}
and
\[
f^+ \coloneq f_1^+ \circ \dots \circ f_n^+, \qquad 
f^- \coloneq f_1^- \circ \dots \circ f_n^-.
\]

We construct $g_i=f_i^+\circ g_i^-$, where
\begin{equation}\label{gminus}
g_i^-(x)=\max\{f_i^-(x),\gamma(x)\},
\end{equation}
for some suitable $\gamma\in\hr$ with $A_\gamma=\emptyset$ and $B_\gamma=A$. This would result in $A_{g^-_i}=A_{f^-_i}=\emptyset$ and $B_{g^-_i}=B_{f^-_i}=B_i$.

Consider the auxiliary function
\[
\Theta(x,t) \coloneq f_1^+\big(\dots f_{n-1}^+\big(f_n^+(x-t)-t\big)\dots -t\big).
\]
In other words, $\Theta(x,t)$ is obtained from $f$ in \eqref{f} by replacing each occurrence of $f_i^-$ by the translation $x \mapsto x-t$. For fixed $x$, the function $\Theta(x,\cdot)$ is strictly decreasing and continuous, with $\Theta(x,0) = f^+(x) \geq x$ and $\lim_{t\to\infty} \Theta(x,t) = -\infty$. Hence, there exists a unique $t_x \geq 0$ such that $\Theta(x,t_x)=x$.

Suppose $g$ is constructed and fix $x \in A$. Consider the sequence
\[
x_0 = x, \; x_1 = g_n^{-}(x_0), \; x_2 = f_n^+(x_1), \; x_3 = g_{n-1}^-(x_2), \; \dots,  \; x_{2n} = f_1^+(x_{2n-1}) = g(x).
\]
We have $x_{i+1} \geq x_i$ for odd $i$, and $x_{i+1} \leq x_i$ for even $i$. In the latter case,
\[
x_i - x_{i+1}= x_i-g_{n+1-i/2}(x_i) \leq x_i - \gamma(x_i)
\]
since $g^-_{n+1-i/2}(x_i)\geq \gamma(x_i)$ by \eqref{gminus}.

We can bound from above each element of this sequence by ignoring all decreasing actions $g_j$:
\begin{equation*}
    x_i\leq f_k^+ (f_{k+1}^+(\dots f_n^+(x)\dots))\leq f_1^+ (f_2^+(\dots f_n^+(x)\dots))=f^+(x),
\end{equation*}
where $k=n+1-i/2$ for even, and $k=n+1-(i-1)/2$ for odd $i$. 

Similarly, the natural lower bound is 
\[
x_i\geq g^-_1(g^-_2( \dots  g_n^-(x) \dots)).
\]
Combining this with the fact $g^-_j(y)\geq f^-_j(y)$ for all $y\in\R$, we get the bound
\begin{equation*}
    x_i\geq  f_1^- (f_2^-(\dots f_n^-(x)\dots))=f^-(x).
\end{equation*}

Now, since $x_i \in [f^-(x), f^+(x)]$, and $g(x_i)\geq \gamma(x_i)$, each action by $g^-_i$ in 
\[
g(x)=f^+_1\circ g^-_1\circ\dots\circ f^+_n \circ g^-_n (x)
\]
decreases the value by at most $\sup\{y-\gamma(y) \mid y \in [f^-(x),f^+(x)]\}$.

It follows then that
\[
g(x) \geq \Theta\big(x, \sup\{y-\gamma(y) \mid y \in [f^-(x),f^+(x)]\}\big).
\]
Thus, to guarantee $g(x)>x$, it suffices to require
\begin{equation}\label{gammacond}
\sup\{y-\gamma(y) \mid y \in [f^-(x),f^+(x)]\} < t_x.
\end{equation}

Define
\[
\gamma_0(x) \coloneq x - \frac12 \min\{t_y \mid y \in [(f^+)^{-1}(x), (f^-)^{-1}(x)]\},
\]
where the minimum exists since the mapping $y \mapsto t_y$ is continuous. For $y \in [f^-(x),f^+(x)]$, we obtain
\begin{align*}
y-\gamma_0(y) 
&= \frac12 \min\{t_z \mid z \in [(f^+)^{-1}(y),(f^-)^{-1}(y)]\} \\
&= \frac12 \min\{t_z \mid y \in [f^-(z), f^+(z)]\} \\
&\leq \frac12 t_x < t_x.
\end{align*}
Hence, $\gamma_0$ satisfies \eqref{gammacond}. Note that $\gamma_0$ is continuous but not necessarily increasing, so it may fail to belong to $\hr$. To remedy this, set
\[
\gamma(x) \coloneq \frac12 \Big(x + \sup_{y \leq x} \gamma_0(y)\Big).
\]
Note that the function $x\mapsto \sup_{y \leq x} \gamma_0(y)$ is non-decreasing, hence, $\gamma$ is increasing.
Clearly $\gamma$ is continuous, so $\gamma \in \hr$. Moreover, since $\gamma_0(x) \leq x$, we have $\gamma(x)\leq x$ for all $x$, and thus $A_\gamma = \emptyset$.

For $x \notin A$, the interval $[f^-(x),f^+(x)]$ degenerates to a singleton $\{x\}$ and $t_x=0$, so $\gamma(x)=x$. On the other hand, if $x \in A$, then $[f^-(x),f^+(x)]$ is non-degenerate and $t_y > 0$ for all $y$ in this interval, hence $\gamma_0(x) < x$. By continuity,
\begin{equation}\label{closex}
\gamma_0(y) < y - \frac12(x-\gamma_0(x)) 
< \frac12(x+\gamma_0(x))
\end{equation}
for all $y$ in some left neighborhood $(y_0,x]$ of $x$. For $y \leq y_0$, monotonicity of $\gamma$ gives
\begin{equation}\label{farx}
\gamma(y) \leq \gamma(y_0) \leq y_0.
\end{equation}
Together, \eqref{closex} and \eqref{farx} imply $\gamma(x)<x$, and hence $B_\gamma = A$.

Thus, the functions $g_i^-$ defined by \eqref{gminus} satisfy $A_{g_i^-}=\emptyset$ and $B_{g_i^-}=B_i$. Consequently, $g_i = f_i^+ \circ g_i^-$ satisfies $A_{g_i}=A_i$ and $B_{g_i}=B_i$. The composition $g=g_1\circ\dots\circ g_n$ then satisfies $A_g=A$ and $B_g=\emptyset$.

A similar construction yields a function $h$ with $B_h=A$ and $A_h=\emptyset$. 
\end{proof}

    Now we are ready to prove Theorem \ref{S=T}.
	
\begin{proof}[Proof of Theorem \ref{S=T}]
The inclusion
\begin{equation*}
	\overline{\mathcal{S}} \subset \mathcal{T}
\end{equation*}
follows immediately from Lemma \ref{tclosed} together with the fact that every standard ordering is typical.

It remains to prove the reverse inclusion
\begin{equation*}
	\mathcal{T} \subset \overline{\mathcal{S}}.
\end{equation*}

Let $\preceq \in \mathcal{T}$ be a typical ordering, and let $f_1,\dots,f_n \succ 1$ be a finite set of positive elements. We will construct a standard ordering that agrees with $\preceq$ on this set.

By Lemma \ref{AnB}, we have
\[
	\bigcup_{i=1}^n A_{f_i} \;\neq\; \bigcup_{i=1}^n B_{f_i}.
\]
Hence, there exists a point 
\[
	x_1 \in \bigcup_{i=1}^n A_{f_i} \,\triangle\, \bigcup_{i=1}^n B_{f_i}.
\]
After reindexing if necessary, we may assume that $x_1 \in A_{f_1}$ (the case $x_1 \in B_{f_1}$ is analogous).  

Proceeding inductively, for $k=2,\dots,n$ we choose
\[
	x_k \in \bigcup_{i=k}^n A_{f_i} \,\triangle\, \bigcup_{i=k}^n B_{f_i},
\]
and reorder the indices $k,\dots,n$ so that $x_k \in A_{f_k}$ or $x_k \in B_{f_k}$.

We now extend the finite sequence $(x_1,\dots,x_n)$ arbitrarily to a dense sequence $\mathbf{x}=(x_m)_{m\in\mathbb{N}}$ in $\mathbb{R}$.

Define $\Omega:\mathbb{N}\to\{+,-\}$ by setting
\[
	\Omega(k) = 
	\begin{cases}
		+ & \text{if } x_k \in A_{f_k}, \\
		- & \text{if } x_k \in B_{f_k},
	\end{cases}
	\quad\text{for } 1 \leq k \leq n,
\]
and choosing $\Omega(k)$ arbitrarily for $k>n$.

By construction, the standard ordering $\preceq_{\mathbf{x},\Omega}$ makes each $f_i$ positive:
\[
	f_1,\dots,f_n \succ_{\mathbf{x},\Omega} 1.
\]
Thus $\preceq_{\mathbf{x},\Omega}$ agrees with $\preceq$ on the given finite set.

Since the choice of $f_1,\dots,f_n$ was arbitrary, it follows that $\preceq$ belongs to the closure $\overline{\mathcal{S}}$.

Therefore, $\mathcal{T}\subset \overline{\mathcal{S}}$, and the theorem follows.
\end{proof}

	\section{Examples of non-dynamical-lexicographic and non-typical orderings}

\label{sec:proof2}

Before completing the proof of Theorem \ref{hierarchy}, we need to establish some auxiliary results.  

The following Compactness Principle \cite{DNR}*{Theorem 1.44} is a classical fact in the theory of left-orderable groups.  

\begin{theorem}\label{compactnessargument}
A group $G$ is left-orderable if and only if each of its finitely generated subgroups is left-orderable.
\end{theorem}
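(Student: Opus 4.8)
The plan is to separate the two implications and to place all the work in the ``if'' direction, which I would settle by a Tychonoff compactness argument carried out inside the product space $\{0,1\}^{G}$ introduced in Section~\ref{sec:top}. The ``only if'' direction is immediate: if $\preceq$ is a left-ordering on $G$ and $H\leq G$ is any subgroup, then the restriction of $\preceq$ to $H$ remains invariant under left multiplication and totally orders $H$; in particular every finitely generated subgroup inherits a left-ordering.

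For the converse I would pass to positive cones via Theorem~\ref{positivecone}, identifying a candidate cone with its indicator function and viewing the collection of all subsets of $G$ as the compact space $\{0,1\}^{G}$. The set $\mathcal{P}(G)$ of genuine positive cones is cut out by three families of requirements: the single condition $1\notin P$; the totality conditions stating that for each $g\neq 1$ at least one of $g,g^{-1}$ lies in $P$; and the closure conditions stating that $g\in P$ and $h\in P$ force $gh\in P$, for every pair $g,h$. (Applying the closure condition to the pair $g,g^{-1}$ together with $1\notin P$ already rules out $g$ and $g^{-1}$ both lying in $P$.) Each requirement depends on only finitely many coordinates, hence defines a clopen subset of $\{0,1\}^{G}$, and $\mathcal{P}(G)$ is exactly their intersection.

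The core step is then to show this intersection is nonempty by verifying the finite intersection property. Any finite subfamily of the above clopen conditions mentions only finitely many elements of $G$; let $H$ be the subgroup they generate. Then $H$ is finitely generated, hence left-orderable by hypothesis, so by Theorem~\ref{positivecone} it carries a positive cone $P_H$. Regarding $P_H$ as a subset of $G$, I claim it satisfies every condition in the chosen finite subfamily: all of the elements appearing in those conditions, together with the products and inverses they reference, already lie in $H$, where $P_H$ is total, closed under multiplication, and excludes $1$. Thus the subfamily is consistent; by compactness $\mathcal{P}(G)$ is nonempty, and Theorem~\ref{positivecone} converts any $P\in\mathcal{P}(G)$ into a left-ordering of $G$.

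I expect the only delicate point to be this consistency step, namely checking that a cone chosen on the finitely generated subgroup $H$ really verifies the prescribed finite list of conditions once read inside $G$. The argument succeeds precisely because every condition is local: it refers to finitely many group elements and to the single product or inverse they determine, so enlarging $H$ to contain all of these finitely many witnesses collapses each listed condition to a statement about $P_H$ alone. Crucially, no compatibility is required between the cones chosen for different finite subfamilies, which is exactly why compactness — rather than an attempt to glue the orderings directly — is the right mechanism here.
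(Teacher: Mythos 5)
Your proof is correct. The paper itself does not prove Theorem \ref{compactnessargument} but cites it to \cite{DNR} as a classical fact; however, your argument --- positive cones via Theorem \ref{positivecone}, clopen finitely-supported conditions in the compact space $\{0,1\}^{G}$, and the finite intersection property verified by choosing a cone on the finitely generated subgroup spanned by the elements each finite subfamily mentions --- is exactly the mechanism the paper deploys to prove its refinement, Theorem \ref{comp2}, so your approach is essentially the paper's own.
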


We will use a slightly modified version of this statement.  

\begin{theorem}\label{comp2}
Let $F\subset G$ be a finite set. Suppose that for every finitely generated subgroup $H<G$, there exists an ordering $\preceq_H$ such that all elements of $F\cap H$ are positive. Then $G$ admits an ordering in which every element of $F$ is positive.
\end{theorem}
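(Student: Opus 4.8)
The plan is to reinterpret the statement in terms of positive cones (Theorem~\ref{positivecone}) and to run a Tychonoff-style compactness argument in the product space $\{0,1\}^{G\setminus\{1\}}$, exactly as in the proof of the classical Compactness Principle, but carrying the extra data of the finite set $F$ through the construction. We may assume $1\notin F$, since otherwise the hypothesis already fails for the trivial subgroup. Identifying a subset $P\subseteq G\setminus\{1\}$ with its characteristic function, the space $\{0,1\}^{G\setminus\{1\}}$ is compact by Tychonoff's theorem, and our goal becomes the production of a single point $P$ of this space that is simultaneously a positive cone on $G$ and contains $F$. Rather than attempting to glue the given orderings $\preceq_H$ directly---which is impossible because they need not be mutually compatible---I would obtain such a $P$ as a point lying in an intersection of closed sets enjoying the finite intersection property.

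Concretely, for each finite subset $S\subseteq G$ I would define $C_S$ to be the set of those $P\in\{0,1\}^{G\setminus\{1\}}$ whose restriction to $S$ agrees with the characteristic function of some positive cone $P'$ on the finitely generated subgroup $\langle S\rangle$ satisfying $F\cap\langle S\rangle\subseteq P'$. Since membership in $C_S$ constrains $P$ only on the finite coordinate set $S$, each $C_S$ is clopen, hence closed. Each $C_S$ is nonempty: applying the hypothesis to $H=\langle S\rangle$ produces an ordering whose positive cone witnesses membership after extending arbitrarily off $S$. The key step is the finite intersection property, and here I would exploit the fact that \emph{the restriction of a positive cone to any subgroup is again a positive cone on that subgroup}. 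Given finitely many index sets $S_1,\dots,S_k$, set $S=S_1\cup\dots\cup S_k$; if $P\in C_S$ is witnessed by a cone $P'$ on $\langle S\rangle$, then $P'\cap\langle S_i\rangle$ is a positive cone on $\langle S_i\rangle$ containing $F\cap\langle S_i\rangle$ and agreeing with $P$ on $S_i$, so $C_S\subseteq C_{S_i}$ for every $i$. Thus $\bigcap_i C_{S_i}\supseteq C_S\neq\emptyset$, and the finite intersection property holds.

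By compactness there is a point $P\in\bigcap_S C_S$, the intersection taken over all finite $S\subseteq G$, and it remains to verify that this $P$ is a positive cone containing $F$. Each required property concerns only finitely many elements of $G$, so it can be certified by testing membership of $P$ in a single well-chosen $C_S$: for $f\in F$ use $S=\{f\}$ to get $f\in P$; for $a,b\in P$ use $S=\{a,b,ab\}$ together with multiplicativity of the witnessing cone $P'$ to get $ab\in P$; and for $g\neq 1$ use $S=\{g,g^{-1}\}$ together with the totality axiom for $P'$ to see that exactly one of $g,g^{-1}$ lies in $P$. These checks yield $P\cdot P\subseteq P$ and $P\cup P^{-1}=G\setminus\{1\}$, so $P$ is the desired positive cone and, via Theorem~\ref{positivecone}, the desired ordering. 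I expect the only genuinely delicate point to be the bookkeeping in the finite intersection property---specifically arranging the sets $C_S$ so that they nest under unions of index sets, which is precisely what reduces the whole argument to the elementary fact that positive cones restrict to positive cones; everything else is a routine verification of the cone axioms coordinate by coordinate.
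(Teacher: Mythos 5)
Your proposal is correct and takes essentially the same route as the paper: a Tychonoff compactness argument in the product space, producing the positive cone as a point in an intersection of closed sets with the finite intersection property, where the key fact in both cases is that a positive cone restricts to a positive cone on any subgroup. The only cosmetic difference is that you index your closed sets $C_S$ by finite subsets $S$ (so each is a genuinely clopen cylinder depending on finitely many coordinates), whereas the paper indexes by finitely generated subgroups $H$ and imposes the cone condition on all of $P\cap H$; the final coordinatewise verification of the cone axioms is the same in both arguments.
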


\begin{proof}
Recall that a positive cone on $G$ is a subset $P\subset G$ satisfying:  
\begin{itemize}
    \item $P\cdot P\subset P$;
    \item $P\cup P^{-1}=G\setminus\{1\}$.
\end{itemize}
Each positive cone $P$ corresponds bijectively to a left-ordering $\preceq_P$ defined by $f\succ_P g$ if and only if $g^{-1}f\in P$.

Let $F=\{f_1,\dots,f_n\}\subset G$ be finite. By assumption, every finitely generated subgroup $H<G$ admits a left-ordering in which all elements of $F\cap H$ are positive. Define
\[
	\mathfrak{P}_{H,F}=\{\,P\subset G : F\subset P,\; P\cap H \text{ is a positive cone in } H\,\}.
\]

Each $\mathfrak{P}_{H,F}$ is a set from the standard basis of the Tychonoff topology on the power set $2^G$. Hence, it is closed.

Now consider an element 
\[
	P\in \bigcap_{H} \mathfrak{P}_{H,F},
\]
where the intersection ranges over all finitely generated subgroups $H<G$. Take $f,g\in P$ and set $H=\langle f,g\rangle$. Since $P\cap H$ is a positive cone in $H$, we have $fg\in P$. Likewise, for any $g\neq 1$, considering $H=\langle g\rangle$ shows that exactly one of $g$ or $g^{-1}$ belongs to $P$. Hence $P$ is a positive cone on $G$, and clearly $F\subset P$. The corresponding ordering $\preceq_P$ satisfies the claim.

It remains to show that the intersection $\bigcap_{H}\mathfrak{P}_{H,F}$ is nonempty. For this, observe that the family
\[
	\{\mathfrak{P}_{H,F} : H\text{ a finitely generated subgroup of }G\}
\]
has the Finite Intersection Property. Indeed, for finitely many subgroups $H_1,\dots,H_m$ of $G$ we have
\[
	\bigcap_{i=1}^m \mathfrak{P}_{H_i,F} \supset \mathfrak{P}_{H,F},
\]
where $H=\langle H_1,\dots,H_m\rangle$ is the smallest subgroup of $G$ that contains each of $H_i$'s. Since $H$ is finitely generated, $\mathfrak{P}_{H,F}$ is nonempty by assumption. It follows then from Tychonoff’s Theorem that the intersection $\bigcap_{H}\mathfrak{P}_{H,F}$ is nonempty as well. This completes the proof.
\end{proof}

\begin{proposition}\label{geinf}
The group $\Ge$ of germs at $\infty$ of homeomorphisms of $\R$ is left-orderable. Moreover, there exist functions $f,g\in\hr$ with $A_f=A_g=\R$ and an ordering $\preceq^{\prime}_{\infty}$ on $\Ge$ such that
\[
	f_\infty \prec^{\prime}_{\infty} \id \prec^{\prime}_{\infty} g_\infty,
\]
where $f_{\infty}$ and $g_{\infty}$ are the germs at infinity of $f$ and $g$, respectively.
\end{proposition}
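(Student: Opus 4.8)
The plan is to prove the two assertions separately: first left-orderability of $\Ge$, then the construction of the special pair $f,g$ together with the ordering $\preceq'_\infty$.

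For left-orderability I would realize $\Ge$ as a group of order-automorphisms of a linearly ordered set via an ultrapower. Fix a non-principal ultrafilter $\mathcal{U}$ on $\N$, form ${}^{*}\R=\R^{\N}/\mathcal{U}$ with its field order, and let $\Omega\subset{}^{*}\R$ be the set of positive infinite elements. Each $\phi\in\hr$ extends to ${}^{*}\phi\colon[x_n]\mapsto[\phi(x_n)]$, and I would verify three points: (i) for $\xi\in\Omega$ the value ${}^{*}\phi(\xi)$ lies again in $\Omega$ and depends only on the germ $\phi_\infty$ (both by transfer, using $\phi(x)\to\infty$ and that $\phi=\psi$ near $+\infty$ forces equality on all $\xi>M$); (ii) $\phi_\infty\mapsto{}^{*}\phi|_\Omega$ is a homomorphism $\Ge\to\Aut(\Omega,<)$; and (iii) it is faithful, since a nontrivial germ differs from the identity on an unbounded set $S$, and any $\xi=[x_n]$ with $x_n\in S$, $x_n\to\infty$, is moved. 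A group acting faithfully by order-automorphisms on a linearly ordered set is left-orderable: well-order $\Omega$ and declare $\phi_\infty\succ\id$ exactly when $\phi_\infty$ increases its $\prec_w$-least non-fixed point, which is the dynamical-lexicographic recipe already used in Section~\ref{sec:top}. This gives the first claim.

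For the second claim I would take $g(x)=x+1$, so $g_\infty=T_\infty$ and $A_g=\R$, and take $f$ conjugate to a fixed-point-free right translation, so that $A_f=\R$ is automatic. The essential difficulty is that $f_\infty$ and $g_\infty$ are both conjugate to $T_\infty$, so no homomorphism to an abelian group—in particular no ``translation number''—can give them opposite signs; the separation must come from a genuinely non-bi-invariant left-ordering. I would choose $f$ so that $f_\infty,g_\infty$ generate a free group (a ping-pong argument on $\Omega$), or, more concretely, take $f(x)=x+\sqrt2$, so that $f_\infty,g_\infty$ generate the free abelian group $\Z+\sqrt2\,\Z\cong\Z^{2}$ of translation germs. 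On this two-generated subgroup one orders with $f_\infty\prec\id\prec g_\infty$: writing elements as $(a,b)$ in the basis $(f_\infty,g_\infty)$, compare first by $-a$ and then by $b$, which makes $f_\infty=(1,0)$ negative and $g_\infty=(0,1)$ positive.

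To upgrade this to an ordering of all of $\Ge$ I would invoke the Compactness Theorem~\ref{comp2} with $F=\{f_\infty^{-1},g_\infty\}$: it suffices to produce, for every finitely generated $H<\Ge$, a left-ordering of $H$ in which the elements of $F\cap H$ are positive. The main obstacle is precisely this per-subgroup step. Since the sub-semigroup generated by $\{f_\infty^{-1},g_\infty\}$ involves only these two elements, whether it contains the identity is independent of $H$, and the whole point of arranging $f_\infty,g_\infty$ to be free (resp. rationally independent translations) is to guarantee that no nontrivial positive product of $f_\infty^{-1}$ and $g_\infty$ collapses to the identity. The delicate point—where I expect the real work to lie—is passing from ``the identity is not a positive product'' to an actual positive cone of $H$ containing $F\cap H$; I would handle this either through the standard extension criterion for left-orderings or, more robustly, by showing that $\langle f_\infty,g_\infty\rangle$ is left-relatively convex in each such $H$, so that the explicit $\Z^{2}$-ordering above extends. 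Feeding the resulting per-subgroup orderings into Theorem~\ref{comp2} then yields the desired $\preceq'_\infty$ on $\Ge$ with $f_\infty\prec'_\infty\id\prec'_\infty g_\infty$.
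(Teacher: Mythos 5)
Your first half is fine: realizing $\Ge$ inside the order-automorphisms of the positive infinite part of an ultrapower and then well-ordering that set is, up to packaging, the same ultrafilter construction the paper uses, and the verification steps you list (well-definedness on germs, faithfulness, order-preservation) all go through.

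The second half has a genuine gap, and it is exactly at the point you flag as ``where the real work lies.'' Theorem \ref{comp2} needs, for \emph{every} finitely generated $H<\Ge$ containing $f_\infty$ and $g_\infty$, an actual left-ordering of $H$ with $f_\infty\prec\id\prec g_\infty$; you only order the subgroup $\langle f_\infty,g_\infty\rangle\cong\Z^2$ itself. The observation that $\id$ is not a positive word in $f_\infty^{-1},g_\infty$ is necessary but nowhere near sufficient for extending to $H$, and your ``more robust'' fallback---that $\langle f_\infty,g_\infty\rangle$ is left-relatively convex in each such $H$---fails already for abelian $H$: taking $H$ generated by the germs of $x+1$ and $x+\sqrt{2}/2$, your subgroup has finite index in $H$, and a proper finite-index subgroup is never convex in any left-ordering (the quotient would inherit a left-invariant order on a nontrivial finite set). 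More fundamentally, by choosing $f,g$ to be commuting translations you have thrown away the structure that makes the per-subgroup step possible. The paper instead takes $f(x)=x+1$ and $g(x)=2x-1$ near $+\infty$, so that $g_\infty$ dominates every power $f_\infty^n$ in an auxiliary ordering $\preceq_\infty$ in which all germs with $A=\R$ are positive. In the dynamical realization $\rho$ of any finitely generated $H$ with respect to $\preceq_\infty$, the bounded increasing sequence $\rho(f_\infty^n)(0)$ then converges to a finite $L>0$ that is \emph{fixed} by $\rho(f_\infty)$ but strictly moved by $\rho(g_\infty)$; a standard ordering whose first test point is $L$ with sign $-$ and whose second is $0$ with sign $+$ separates the two signs on $H$, and only then is Theorem \ref{comp2} invoked. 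Without some such domination (or another explicit mechanism producing the per-subgroup orderings), your argument does not close.
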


\begin{proof}
The orderability of $\Ge$ is established in \cite{Mann}*{Proposition 2.2} (see also \cite{DNR}*{Remark 1.1.13}) using Theorem \ref{compactnessargument}. 

 We present an alternative proof here.

    Fix an arbitrary non-principal ultrafilter $\mathcal{U}$ on $\N$, and equip the set $\mathcal{X}_{\infty}=\{\mathbf{x}_i=\{(x_n^{(i)})\}\}_{i\in I}$ of all increasing to infinity sequences of real numbers with some well-ordering.

    For a non-trivial germ $h_{\infty}\in\Ge$ of $h\in\hr$ let $i=i_h$ be the smallest index, such that the set 
    \[     
    S^{(0)}_{h,i}=\{n\in\N\mid h(x_n^{(i)})=x_n^{(i)}\}     
    \]
    does not belong to the ultrafilter $\mathcal{U}$. 
    This index always exists because one can construct a sequence $\mathbf{x}_i\in \mathcal{X}{\infty}$ with $C_{h,i}=\emptyset$. Moreover, the index $i_h$ does not depend on the choice of the representative $h$.

    Now, we declare the germ $h_{\infty}$ to be positive if the set 
    \[
    S^{+}_{h,i}=\{n\in\N\mid h(x_n^{(i)})>x_n^{(i)}\} =\{n\in \N\mid x_n^{(i)}\in A_h\}
    \]
    belong to the ultrafilter $\mathcal{U}$. Similarly, negative, if it is the set 
    \[
    S^{-}_{h,i}=\{n\in\N\mid h(x_n^{(i)})<x_n^{(i)}\} =\{n\in \N\mid x_n^{(i)}\in B_h\}.
    \]
    Again, this is independent on the specific choice of $h$.

    To verify that this construction indeed defines an ordering on $\Ge$, we need to check that the product of positive germs remains positive. Let $h_{\infty}^{(1)}, h_{\infty}^{(2)}$ be positive germs, and let $h_1,h_2\in\hr$ be their representatives. Let also $h=h_1h_2$, $i_1=i_{h_{\infty}^{(1)}},i_2=i_{h_{\infty}^{(2)}}$, and $i=\min\{i_1,i_2\}$.

    Then, for any index $j<i$ we have
    \[
    S^{(0)}_{h_1,j}\bigcap S^{(0)}_{h_2,j} \in \mathcal{U}
    \]
    as an intersection of two elements of the ultarfilter. But for any $n\in S^{(0)}_{h_1,j}\cap S^{(0)}_{h_2,j}$
    \[
    h(x_n^{(j)})=h_1\bigl(h_2(x_n^{(j)})\bigr)=h_1(x_n^{(j)})=x_n^{(j)}.
    \]
    Hence, $n\in S^{(0)}_{h,j}$, and 
    \[
    S^{(0)}_{h,j}\supset S^{(0)}_{h_1,j}\bigcap S^{(0)}_{h_2,j}.
    \]
    So, $S^{(0)}_{h,j}\in \mathcal{U}$ and $i_h\geq i$.

    Note that $S^{+}_{h_1,i}\in U$ or $S^{+}_{h_2,i}\in U$. So,
    \[
    \left(S^{+}_{h_1,i}\bigcap S^{+}_{h_2,i}\right)\bigcup\left(S^{+}_{h_1,i}\bigcap S^{(0)}_{h_2,i}\right)\bigcup \left(S^{(0)}_{h_1,i}\bigcap S^{+}_{h_2,i}\right)\in \mathcal{U}
    \]
    because in one of these parentheses, both sets are from $\mathcal{U}$. But 
    \[
    \left(S^{+}_{h_1,i}\bigcap S^{+}_{h_2,i}\right)\bigcup\left(S^{+}_{h_1,i}\bigcap S^{(0)}_{h_2,i}\right)\bigcup \left(S^{(0)}_{h_1,i}\bigcap S^{+}_{h_2,i}\right)\subset S^{+}_{h,i}.
    \]
    So $S^{+}_{h,i}\in \mathcal{U}$, $i_h=i$ and $h$ represents a positive germ.

Observe that the germ of any function $h$ with $A_h=\R$ is positive in the ordering $\preceq_{\infty}$ constructed above.

Consider the functions
\[
	f(x)=x+1, \qquad 
	g(x)=
	\begin{cases}
		2x-1, & \text{if } x>1,\\
		x+1, & \text{otherwise}.
	\end{cases}
\]
Note that $A_f=A_g=\R$. Furthermore,
\[
	g(x)=2x-1 > f^{(n)}(x)=x+n
\]
for all sufficiently large $x$. Thus, for any $n\geq 1$,
\begin{equation}\label{inford}
	g_{\infty}\succ_{\infty} f^n_{\infty}\succ_{\infty}\id.
\end{equation}

Let $H$ be a finitely generated subgroup of $\Ge$, and let $\rho$ denote the dynamical realization of $H$ with respect to the restriction of $\preceq_{\infty}$. If $H$ contains both $f_{\infty}$ and $g_{\infty}$, then from \eqref{inford} we obtain
\[
	\rho(g_{\infty})(0)>\rho(f^n_{\infty})(0)>0.
\]
Passing to the limit $n\to\infty$, we deduce
\[
	\rho(g_{\infty})(0)\geq \lim_{n\to\infty}\rho(f^n_{\infty})(0)=:L>0.
\]
In particular, $L$ is finite and
\[
	\rho(g_{\infty})(L)>\rho(g_{\infty})(0)\geq\rho(f_{\infty})(L)=L.
\]

Equip $\hr$ with the standard ordering $\preceq_{\mathbf{x},\Omega}$, where $\mathbf{x}$ is a dense sequence beginning with $x_1=L$, $x_2=0$, and $\Omega$ is a sign function defined by $\Omega(1)=-$, $\Omega(2)=+$. Then
\[
	\rho(g_{\infty}) \prec_{\mathbf{x},\Omega} \id \prec_{\mathbf{x},\Omega} \rho(f_{\infty}).
\]
This induces an ordering $\preceq$ on $H$ with
\[
	g_{\infty} \prec \id \prec f_{\infty}.
\]

Finally, Proposition \ref{geinf} follows from Theorem \ref{comp2} applied with $G=\Ge$ and $F=\{f_{\infty}, g_{\infty}^{-1}\}$.
\end{proof}

Now we are ready to finish the proof of Theorem \ref{hierarchy}.
    
	\begin{proof}[Proof of Theorem \ref{hierarchy}]

It remains to show the strictness of the inclusions
\[
\mathcal{G} \subsetneq \mathcal{T} \subsetneq \LO(\hr).
\]

First, we justify the inclusion
\[
\mathcal{G} \subsetneq \mathcal{T} 
\]
by constructing a typical but not general dynamical-lexicographic ordering.

	Let $\Ho<\hr$ denote the subgroup consisting of homeomorphisms with trivial germs at $\infty$.  
	In particular, we have the identification
	\[
		\faktor{\hr}{\Ho}\simeq \Ge.
	\]

We equip $\Ge$ with the ordering $\preceq_{\infty}$ from the first part of the proof of Proposition \ref{geinf}, and $\Ho$ with any typical ordering $\preceq_0$.

We define the ordering $\preceq$ on $\hr$ by declaring
\[
		h \succ \id \quad \Longleftrightarrow \quad 
		\big(h_{\infty}\succ_{\infty}\id \text{ in }\Ge\big)\ \text{or}\ \big(h\in\Ho \text{ and } h\succ_0 \id\big).
\]

Once the ultrafilter $\mathcal{U}$, and the well-ordering on $\mathcal{X}{\infty}$ are fixed, the sign of any given $f\in \hr\setminus \Ho$ is determined by the sets $A_f$ and $B_f$, namely by whether the set $S^{+}_{f,i}=\{n\in \N\mid x_n^{(i)}\in A_f\}$ or the set $S^{-}_{f,i}=\{n\in \N\mid x_n^{(i)}\in B_f\}$ belongs to the ultrafilter. Hence, the ordering $\preceq$ is typical. 

To see that it is not dynamical-lexicographic, let $f_x^+,f_x^-$ be two functions with the same non-trivial germ at $\infty$, such that
\[
f_x^-(x)<x<f_x^+(x).
\]
Then they have the same sign in $\preceq$, but the opposite signs in any ordering $\preceq_{\leq_w,\Omega}$ with $x$ being the $\leq_w$-smallest point. Since $x$ is arbitrary, this shows that the ordering $\preceq$ differs from any dynamical-lexicographic ordering $\preceq_{\leq_w,\Omega}$.

Finally, we cover the inclusion
\[
\mathcal{T} \subsetneq \LO(\hr).
\]

	Consider the functions $f,g\in\hr$ from Proposition \ref{geinf}, and their germs at $\infty$, $f_{\infty}, g_{\infty}\in\Ge$.
	Then, there exists an ordering $\preceq_{\infty}^{\prime}$ on $\Ge$ such that
	\[
		g_{\infty}\prec^{\prime}_{\infty} \id \prec^{\prime}_{\infty} f_{\infty}.
	\]
	
	As above, we extend the ordering $\preceq_0$ on $\Ho$ to an ordering $\preceq^{\prime}$ on $\hr$ by declaring
	\[
		h \succ^{\prime} \id \quad \Longleftrightarrow \quad 
		\big(h_{\infty}\succ^{\prime}_{\infty}\id \text{ in }\Ge\big)\ \text{or}\ \big(h\in\Ho \text{ and } h\succ_0 \id\big).
	\]
	
	Under this ordering, we have
	\[
		g \prec^{\prime} \id \prec^{\prime} f.
	\]
	Recall that $A_f = A_g = \R$ and $B_f=B_g=\emptyset$.  
	Thus, the ordering $\preceq^{\prime}$ cannot be typical, which concludes the proof.
	
\end{proof}

\end{document}